\mathchardef\varDelta="7101
\let\iso\cong
\let\sma\wedge
\newcommand{\htp}{\simeq}
\renewcommand{\to}{\mathchoice{\longrightarrow}{\rightarrow}{\rightarrow}{\rightarrow}}
\newcommand{\cO}{{\mathcal O}}
\let\catsymbfont\mathcal
\newcommand{\aA}{{\catsymbfont{A}}}
\newcommand{\aB}{{\catsymbfont{B}}}
\newcommand{\aC}{{\catsymbfont{C}}}
\newcommand{\aD}{{\catsymbfont{D}}}
\newcommand{\aF}{{\catsymbfont{F}}}
\newcommand{\aH}{{\catsymbfont{H}}}
\newcommand{\aM}{{\catsymbfont{M}}}
\newcommand{\aO}{{\catsymbfont{O}}}
\newcommand{\aR}{{\catsymbfont{R}}}
\newcommand{\aS}{{\catsymbfont{S}}}
\newcommand{\aT}{{\catsymbfont{T}}}
\newcommand{\A}{\mathcal{A}}
\newcommand{\B}{\mathcal{B}}
\newcommand{\C}{\catsymbfont{C}}
\renewcommand{\L}{\mathrm{L}}
\newcommand{\R}{\mathcal{R}}
\newcommand{\T}{\mathcal{T}}
\newcommand{\X}{\mathcal{X}}
\newcommand{\Y}{\mathcal{Y}}
\renewcommand{\O}{\mathcal{O}}
\newcommand{\bE}{\mathbb{E}}
\newcommand{\bN}{{\mathbb{N}}}
\newcommand{\bR}{{\mathbb{R}}}
\newcommand{\bS}{\mathbb{S}}
\newcommand{\bZ}{{\mathbb{Z}}}
\def\quickop#1{\expandafter\DeclareMathOperator\csname
#1\endcsname{#1}}
\newcommand{\Man}{\aM}
\newcommand{\N}{\mathrm{N}}
\newcommand{\PrL}{\mathrm{Pr^L}}
\newcommand{\PrR}{\mathrm{Pr^R}}
\newcommand{\PR}{\mathrm{Pr}}
\newcommand{\PrLR}{\mathrm{Pr^{L,R}}}
\newcommand{\Pre}{\mathrm{Pre}}
\newcommand{\stabprcat}{\Pr_\mathrm{St}^\L}
\newcommand{\Fun}{\mathrm{Fun}} 
\newcommand{\ispec}{\spectra}
\renewcommand{\top}{\mathrm{Top}}
\newcommand{\slot}{-}
\newcommand{\too}{\longrightarrow}
\renewcommand{\i}{\infty}
\newcommand{\Sp}{\mathrm{Sp}}
\newcommand{\Spinc}{Spin^{c}}
\newcommand{\Alg}{\mathrm{Alg}}
\numberwithin{equation}{section}
\newtheorem{theorem}[equation]{Theorem}
\newtheorem*{theorem*}{Theorem}
\newtheorem{corollary}[equation]{Corollary}
\newtheorem{lemma}[equation]{Lemma}
\newtheorem{proposition}[equation]{Proposition}
\theoremstyle{definition}
\newtheorem{definition}[equation]{Definition}
\newtheorem{remark}[equation]{Remark}
\newtheorem{construction}[equation]{Construction}
\newcommand{\GLsym}{GL_{1}}
\newcommand{\GL}[1]{\GLsym #1}
\newcommand{\Sing}[1]{\SingOp #1}
\newcommand{\SingOp}{\Pi_{\infty}}
\newcommand{\sinf}{\Sigma^{\infty}}
\newcommand{\splus}{\sinf_{+}}
\newcommand{\xra}[1]{\xrightarrow{#1}}
\newcommand{\CatOf}[1]{(\text{#1})}
\newcommand{\einfty}{E_{\infty}}
\newcommand{\heq}{\simeq}
\renewcommand{\i}{\infty}
\newcommand{\PT}{\mathrm{PT}}
\newcommand{\ptspace}{*}
\DeclareMathOperator{\spaces}{\aS}
\DeclareMathOperator{\spectra}{\mathrm{Sp}}
\newcommand{\Smash}{\wedge}
\newcommand{\Z}{\mathbb{Z}}
\begin{document}

\title[Multiplicative Thom spectra and the twisted Umkehr
map]{Parametrized spectra, multiplicative Thom spectra, and the
twisted Umkehr map} 

\author[Ando]{Matthew Ando}
\address{Department of Mathematics \\
The University of Illinois at Urbana-Champaign \\
Urbana IL 61801 \\
USA} \email{mando@math.uiuc.edu}

\author[Blumberg]{Andrew J. Blumberg}
\address{Department of Mathematics, University of Texas,
Austin, TX \ 78703}
\email{blumberg@math.utexas.edu}

\author[Gepner]{David Gepner}
\address{Department of Mathematics, Purdue University, West Lafayette, IN \ 47907}
\email{dgepner@purdue.edu}

\thanks{Ando was supported in part by NSF grant DMS-1104746.
A.~J.~Blumberg was supported in part by NSF grant DMS-0906105.  The
idea for the material in section \ref{sec:twisted} arose in the course
of an AIM SQuaRE on algebraic topology and physics.  Ando and Gepner
collaborated on this paper during the workshop \textit{K-theory and
quantum fields} at the Erwin Schr\"odinger Institute.  We thank both
the AIM and the ESI for providing superb settings in which to work.}

\begin{abstract}
We introduce a general theory of parametrized objects in the setting
 of $\i$-categories.  Although spaces and spectra parametrized over
 spaces are the most familiar examples, we establish our theory in the
 generality of objects of a presentable $\i$-category parametrized
 over objects of an $\i$-topos.  We obtain a coherent functor
 formalism describing the relationship of the various adjoint functors
 associated to base-change and symmetric monoidal structures.

 Our main applications are to the study of generalized Thom spectra.
 We obtain fiberwise constructions of twisted Umkehr maps for twisted
 generalized cohomology theories using a geometric fiberwise
 construction of Atiyah duality. In order to characterize the
 algebraic structures on generalized Thom spectra and twisted
 (co)homology, we characterize the generalized Thom spectrum as a
 categorification of the well-known adjunction between units and group
 rings.
\end{abstract}

\maketitle
\setcounter{tocdepth}{1}
\tableofcontents

\section{Introduction}

In recent work with Hopkins and Rezk \cite{ABGHR, ABGHR1}, we
introduced an $\i$-categorical approach to parametrized spaces and
spectra and showed that it provides a useful context in which to study
Thom spectra and orientations.  If $X$ is a Kan complex and $\spectra$ is
the $\i$-category of spectra, then our model for the $\i$-category of
spectra parametrized by $X$ is simply the $\i$-category
$\Fun(X^{\op}, \spectra)$ of presheaves of spectra on $X$.  Conceptually
this approach exhibits the $\i$-category $\spectra$ as the ``classifying
space'' for bundles of spectra.  In the present paper we develop this
idea to give a complete theory of $\infty$-categories $\aC$
parametrized over objects of an arbitrary $\i$-topos, and we apply this
theory to develop the multiplicative theory of Thom spectra and the
theory of twisted Umkehr maps.

The perspective that we take in this paper paper is an elaboration of
the modern perspective on parametrized homotopy
theory (explored by Hu~\cite{Hu} and beautifully expounded upon by May
and Sigurdsson~\cite{MS}) that is based on an analogy between
categories of spaces parametrized over a base space and derived
categories of sheaves over a base scheme.  In the context of algebraic
geometry, associated to morphisms of the base scheme are collections of
induced derived functors which assemble into adjoint pairs satisfying
various intricate relationships.  This data is organized into what is
often referred to as Grothendieck's six-functor formalism, and is an
essential foundation of modern work in algebraic geometry,
particularly in the context of duality phenomena.  As such, we view
the development of base change functors as the basic foundational task
when setting up a theory of parametrized objects. 

One serious issue that classically arises in this context is the
coherence of the diagrams given by this structure.  For, instance the
painstaking work of Conrad~\cite{bconrad} handles some of the issues
in Hartshorne's work~\cite{hartshorne} essentially by hand.  A start
on this in the motivic context was given by Voevodsky using his
formalism of cross-functors~\cite{voevodsky}.  Voevodsky
explains that coherence can be handled either via fibered categories
(e.g., the Grothendieck construction) or using a good theory of
2-functors.  Following these outlines, a great deal of hard work in
the motivic context has developed this coherence theory~\cite{ayoub,
cisinskideglise}.  In the case of parametrized spectra, efforts in this
direction can be found in~\cite[\S13,\S17]{MS}.

One basic point of departure for this paper is the observation that
solutions to the kind of coherence problems which arise in these sorts
of situations are precisely the sorts of issues that the
$\infty$-categorical formalism is well-placed to resolve.
Specifically, $\infty$-functors are a natural generalization of
2-functors, and at the heart of Lurie's treatment of quasicategories
is a generalization of the Grothendieck construction.  Specifically,
Lurie's approach depends on a correspondence from functors from a
$\infty$-category $\aC$ into the $\infty$-category $\Cat_\infty$ of
$\infty$-categories to $\infty$-categories fibered over $\aC$.  From
this perspective, the right way to describe these functor formalisms
is in terms of sheaves valued in the $\i$-category of symmetric
monoidal presentable $\i$-categories and symmetric monoidal functors
which admit {\em both} left and right adjoints.  Whereas the
algebro-geometric context is very difficult to study, the construction
of the functor formalism and coherence is very straightforward in the
topological setting.  One of the key technical observation is that
parametrizing over the category of spaces or more generally arbitrary
$\i$-topoi can take advantage of the fact that such categories are
accessible left exact localizations of $\i$-categories which are
freely generated under colimits.

\subsection{Objects parametrized over $\i$-topoi}

Since our work in this paper is primarily topological and
differential-geometric in nature, our motivating example will be the
case of objects parametrized over the $\infty$-category of spaces, and
we will focus on this case in the introduction.  However, in the body
of the paper we state our results in terms of an arbitrary
$\infty$-topos, equipped with the cartesian symmetric monoidal
structure.  Relevant examples of $\infty$-topoi other than spaces
include $G$-spaces for topological groups $G$, or presheaves on the
orbit $\infty$-category $\Pre(\mathrm{Orb}_G)$, or sheaves of spaces
on a Grothendieck site, such as the site associated to a topological space.

Let $\spaces$ denote the $\infty$-category of spaces.  Since $\spaces$
is freely generated under colimits by its final object (the point),
for any $\infty$-category $\aM$ with small limits, the
$\infty$-category of limit-preserving functors $\spaces^{\op} \to \aM$
is equivalent (via evaluation at the point) to $\aM$ itself.  If $\aC$
is an object of $\aM$, then we will write
\[
\aC_{/(-)}\colon \spaces^{\op}\too\aM
\]
for the resulting functor.
Now, to say that $\aC_{/ (\slot)}$ preserves limits is to say that it satisfies
descent, and so we call such a functor a \emph{sheaf} on $\spaces$ with values in
$\aM$.  For example, the $\i$-category $\widehat{\Cat}_{\i}$ of (not
necessarily small) $\i$-categories is complete, and so any
$\i$-category $\aC$ uniquely determines, and is determined by, a sheaf  
\[
\aC_{/ (\slot)}\colon \spaces^{\op}\to \widehat{\Cat}_{\i}
\]
of $\i$-categories on $\spaces$. 

If $f\colon S\to T$ is a map of spaces, we write $f^{*}$ for the
induced functor $\aC_{/T} \to \aC_{/S}$.  We now restrict attention
to \emph{presentable} $\i$-categories $\aC$, essentially without loss
of generality: if $\C$ is any $\i$-category, then $\C$ embeds fully
faithfully into $\Pre(\C)$, which is presentable.  (Although note that
making this precise involves set-theoretic technicalities.)  
Then $f^*$ has a left adjoint $f_!$ and a right
adjoint $f_*$.  In certain cases, there is even a further right
adjoint $f^!$ of $f_*$, for instance when $f$ is proper in the sense
that its homotopy fibers are compact.  This includes the case of a
smooth and proper family of manifolds $S\to T$, an important example
in the study of twisted umkehr maps.

Additionally, many examples of interest involve multiplicative structure
on $\aC$.  If $\aC^\otimes$ is a presentable symmetric monoidal
$\i$-category then $\aC_{/S}^\otimes$ has a tensor bifunctor
which commutes with colimits in each variable.
For each object $X \in \aC_{/S}$, the ``left multiplication by $X$''
functor
\[
X \otimes_S (-) \colon \aC_{/S} \to \aC_{/S}
\]
admits a right adjoint
\[
F_{S}(X,-) \colon \aC_{/S} \to \aC_{/S},
\]
and we assemble all of this structure in the following omnibus theorem.

\begin{theorem}\label{thm:main}
A presentable symmetric monoidal $\i$-category $\aC^\otimes$ uniquely
determines, and is determined by, a sheaf of presentable symmetric
monoidal $\i$-categories 
\[
\aC^\otimes_{/(-)} \colon \spaces^{\op}\longrightarrow\mathrm{CAlg}(\PrL)
\]
together with left adjoints $f_!$ to the restrictions $f^*$, for
arbitrary maps of spaces $f\colon S\to T$, and right adjoints $f^!$ to
the pushforwards $f_*$, for proper maps of spaces $f\colon S\to T$,
satisfying certain coherences and relations detailed in
Section~\ref{sec:wirthmuller-contexts}. Moreover, for any space
$S$, $\aC^\otimes_{/S}$ is equivalent to the symmetric monoidal
$\i$-category $\Fun(S^{\op},\aC)^\otimes$ of $\aC^\otimes$-valued
presheaves on $S$.
\end{theorem}

There are versions of the main theorem that hold with spaces replaced
by an arbitrary $\i$-topos and $\aC^\otimes$ a presentable
$\aO^\otimes$-monoidal $\i$-category for an $\i$-operad
$\aO^{\otimes}$ equipped with a fixed map
$\bE_1^\otimes \to\aO^\otimes$.  (See Theorems~\ref{thm:exist1} and
~\ref{thm:exist2} for the precise statements.)

In Appendix~\ref{sec:comparison}, we show when restricting to the case
of parametrized spectra, Theorem~\ref{thm:main} generalizes the
homotopical structure underlying the theory of parametrized spectra
in~\cite{MS}.  Moreover, there are distinct advantages to the
$\i$-categorical context when dealing with multiplicative structures;
these were not handled in full generality in~\cite{MS} due to the
ferocious point-set technical difficulties.

\subsection{The twisted umkehr map and multiplicative Thom spectra}

One of our primary motivations for this treatment of parametrized
homotopy theory is to characterize the multiplicative properties of
the Thom spectrum functor.  We explain our foundational results in
this direction below, but we now turn to describe the most interesting
application, the construction of twisted Umkehr maps.

We begin by recalling the construction of the Thom spectrum functor in
our framework.  Let $R$ be an $\bE_n$-ring spectrum, and let
$\Mod_{R}$ be the $\i$-category of right $R$-modules.  Within $\Mod_R$
is the full subgroupoid spanned by the invertible $R$-modules,
$\Pic_R$.  Given a space $X$ and a map $f \colon X\to\Pic_{R}$,
in \cite{ABGHR, ABGHR1, ABG} we defined the Thom spectrum of $f$ to be
the colimit $Mf$ of the composite map
\[
X \xra{f} \Pic_R \xra{} \Mod_R.
\]
Regarding such a map $\alpha$ as classifying a twisted form of the
trivial $R$-line bundle over $X$, we can consider the associated
$R$-module Thom spectrum $M\alpha$ to be the $\alpha$-twisted and
$R$-stable homotopy type of $X$, and define twisted homology and
cohomology accordingly.

\begin{definition} \label{def-f-twisted-R-homology}
Let $R$ be an $\bE_n$-ring spectrum, $n>0$, and let $\alpha\colon X\to\Pic_R$ be a map. The \emph{$\alpha$-twisted
$R$-homology and $R$-cohomology groups} of $X$ are given by
\begin{align*}
    R_{\alpha}(X) & = \pi_{0}\map_R(R,
    M\alpha) \iso \pi_{0} M\alpha\\
    R^{\alpha}(X) & = \pi_{0}\map_R(M(-\alpha),R).
\end{align*}
Here $-\alpha$ denotes the inverse of $\alpha$ in the grouplike $\bE_\infty$-space $\Pic_R$ (i.e., the involution given by taking an invertible $R$-module $M$ to its $R$-linear dual $D_R M$) and $\map_R(-,-)$ denotes the mapping space in the $\infty$-category
$\Mod_R$ of right $R$-modules. 
\end{definition}

Note that this differs in two ways from the convention used
in \cite{ABGHR1, ABG}.  First, since $\Pic_R$ need not decompose as
$\bZ\times BGL_1(R)$, so it is potentially problematic to specify maps
$f\colon X\to\Pic_R$ in terms of maps $\alpha\colon X\to BGL_1(R)$ and
$n\colon X\to\bZ$; moreover, even if $\pi_0\Pic_R\cong\bZ$, the
induced map $\Pic_R\to\bZ$ does not necessarily admit a splitting as
grouplike $\bE_\infty$-spaces.  Second, in keeping with the convention
with ordinary homology and cohomology that $R_n(\ast)\cong
R^{-n}(\ast)$, as well the convention that twisted cohomology should
be the space of sections of the associated bundle of spectra, it is
necessary to dualize the twist before taking the Thom spectrum.

Nevertheless, given a invertible bundle of $R$-modules $f\colon X\to\Pic_R$, which we view (via the inclusion $\Pic_R\to\Mod_R$) as an object of the stable $\i$-category $\Fun(X,\Mod_R)$ of bundles of $R$-modules over $X$ (which is canonically equivalent to $\Fun(X^{\op},\Mod_R)$ since $X$ is an $\i$-groupoid), we may therefore form, for any integer $n$, the suspension $\Sigma^n f\in\Fun(X,\Mod_R)$.
This is of course still a bundle of invertible $R$-modules $\Sigma^n
f\colon X\to\Pic_R$, and we write
\[
R^{n+f}(X)= R^{\Sigma^n f}(X)
\]
for the twisted cohomology of $X$ with respect to the suspended twist.

We now want to construct Umkehr maps for twisted cohomology theories.
We begin by recalling how this works in the untwisted case.  For
convenience, we switch to using exponential notation for Thom spectra;
e.g., given a twist $\alpha \colon X \to \Pic_R$ the Thom spectrum
will be written $X^{\alpha}$.  Now let $X$ be a compact manifold with
tangent bundle $T$.  The Pontryagin-Thom construction gives a stable
map   
\[
\PT (X)\colon   \bS \to X^{-T} \htp DX
\]
dual to the map $X\to \ptspace$.  If $f\colon X \to B$ is a fiber
bundle of $d$-dimensional compact manifolds with tangent bundle along the fibers
$T_{f}$, then this construction generalizes to give a stable map 
\[
\PT (f) \colon    B_{+} \to X^{-T_{f}}.
\]
If $R$ is a ring spectrum, then we get a map 
\[
R^{*} (X^{-T_{f}}) \to R^{*} (B_{+})
\]
and composing with a Thom isomorphism $R^{*+d}
X \iso R^{*} (X^{-T_{f}})$, we obtain an \emph{Umkehr} map 
\[
      R^{*+d} (X) \to R^{*} (B).
\]

Recently it has become important in a number of contexts to consider
twisted generalizations of these constructions (see for
example \cite{fw:asd,MR2438341,arixv:math/0507414}).  In our context,
we explain how to provide twisted Umkehr maps for any sufficiently multiplicative generalized
cohomology theory.  The basic strategy is as follows.  Composing, a twist
$\alpha \colon B\to \Pic_{R}$ gives rise to a twist  
\[
     X \xra{\alpha f} \Pic_{R}.
\]
If $R$ is an $E_n$ ring spectrum, the category of twists is an
$E_{n-1}$ monoidal category and we show that the Thom spectrum functor
applied to the twist lands in $E_{n-1}$ ring spectra.  In particular,
we can make sense of the generalized $R$-module Thom spectrum
$X^{-T_{f} + \alpha f}$.  Provided we can construct a twisted
Pontryagin-Thom transfer map  
\[
\PT (f,\alpha)\colon B^{\alpha} \xra{} X^{-T_{f}+\alpha f},
\]
an orientation $R^{*+d} (X) \iso R^{*} (X^{-T_{f}+\alpha f})$ then
induces the twisted Umkehr map   
\[
    R^{*+d} (X) \to R^{*} (B^{\alpha})\cong R^{*-\alpha}(B).
\]

The key idea is to show that the Pontryagin-Thom map can be realized
as the pushforward of a fiberwise map
\[
\PT (f_{/B}) \colon \bS_{B} \to D (f_{/B})
\]
along the map $p \colon \to \ptspace$ to obtain the map $\PT (f)$.  In
order to use this description, we also need to be able to provide a
geometric interpretation of $D(f_{/B})$ in various cases; notably, for
smooth and proper families of compact manifolds.  This is surprisingly
difficult from a purely homotopical viewpoint, as it involves
grappling with the functoriality of the Atiyah duality map in order to
construct a parametrized version.  Our approach involves ideas related
to Hu's study of the dualizing complex in the setting of parametrized
stable homotopy theory~\cite{Hu}.
 
Given such a fiberwise Pontryagin-Thom map, we can twist by $\alpha$
via fiberwise smashing to get the map 
\[
  \PT (f_{/B})\Smash_{B} \alpha \colon \bS_{B} \Smash_{B} \alpha \to D
  (f_{/B}) \Smash_{B} \alpha
\]
of $R$-module spectra over $B$.  Applying the pushforward $p_{!}$
associated to $p \colon B \to \ptspace$ and using the multiplicative
structure of the Thom spectrum functor now yields the map $\PT
(f,\alpha)$ as well as in many cases a geometric description of the
target.

\begin{remark} 
The basic idea that in geometric circumstances the Pontryagin-Thom map
arises from a fiberwise construction goes all the way back to the
origins of the classical Umkehr map, participating as it does in the
``families'' index theorems of Atiyah and Singer~\cite{MR43:5554}.  It
is also explicit in Becker and Gottlieb's classic
paper~\cite{BeckerGottlieb}, for example.  May and Sigurdsson have a
beautiful exposition of a geometric fiberwise construction in the
setting of a ``bundle theory'' for parametrized spectra in~\cite{MS}.
\end{remark}

\subsection{Categorification of the Picard group and multiplicative
Thom spectra}

We now return to describe our foundational results on the
multiplicative structure of the Thom spectrum functor.  The monoidal
structure we have studied so far on $\aC^\otimes_{/S}$ in
Theorem \ref{thm:main} is pointwise on $S$; i.e., induced from the
diagonal map $S \to S \times S$.  For our applications for Thom
spectra, we will develop a multiplicative theory of objects
parametrized over \emph{monoidal} spaces, where the product of
parametrized objects involves the product on the base.

Our approach involves a categorification of the notions of Picard
group.  To explain where this comes from, recall that in algebra the
units functor $\GLsym$ arises from the free/forgetful adjunction
\[
\Z[\slot]\colon \CatOf{monoids} \to \Alg (\Mod_{\Z}).
\]
The restriction of this to $\Z[\slot]\colon \CatOf{groups} \to \Alg
(\Mod_{\Z})$ is then the left adjoint of the units functor $\GLsym$.  We
will proceed by categorifying this adjunction, as follows.

Fix a suitable $\i$-operad $\aO$.  If $S$ is an $\aO$-algebra,
consider the \emph{covariant} functor  
\[
\Pre\colon \spaces \to \PrL
\]
whose value at $S$ is $\Pre(S)
= \Fun(S^{\op},\spaces) \simeq\spaces_{/S}$, and which takes $f\colon
S\to T$ to the left adjoint $f_{!}$.  This functor extends to a
symmetric monoidal functor
\[
\Pre\colon \spaces^{\otimes}\to (\PrL)^{\otimes},
\]
and so induces a functor 
\[
\Pre\colon \Alg_{\aO} (\spaces) \to \Alg_{\aO} (\PrL),
\]
where here $\Alg_{\aO}(-)$ denotes the $\i$-category of
$\aO$-algebras.  Since $\spaces$ is the unit of the symmetric monoidal
structure on $\PrL$, it is apppriate to think of $\PrL$ as the
$\i$-category $\Mod_{\spaces}$ of $\spaces$-modules, and of $\Pre$ as
the free $\spaces$-module functor, analogous to the free $\Z$-module
functor.

Let $\Alg_{\aO}^{\mathrm{gp}} (\spaces)$ be the full subcategory of
$\Alg_{\aO} (\spaces)$ on the grouplike algebra objects.  We construct 
an analogous right adjoint $\Pic$ for the functor 
\[
\Pre\colon \Alg_{\aO}^{\mathrm{gp}}(\spaces) \hookrightarrow \Alg_{\aO}
(\spaces) \xra{\Pre} \Alg_{\aO} (\PrL). 
\]

\begin{definition}
Let $\aO^{\otimes}$ be an $\i$-operad equipped with a map from
$\bE_1^{\otimes}$, and let $\R^{\otimes}$ be an $\aO$-monoidal
$\i$-category.  Define $\Pic(\R)$ to be the maximal grouplike
$\i$-groupoid in the $\aO$-monoidal $\i$-category of invertible
objects of $\R^{\otimes}$.
\end{definition}

The categorified Picard group describes the right adjoint to
$\Pre$.

\begin{theorem}\label{thm:adj}
The Picard $\i$-groupoid defines a functor
\[
\Pic \colon \Alg_{\aO} (\PrL) \to \Alg_{\aO}^{\mathrm{gp}} (\spaces),
\]
that is right adjoint to the free $\aO$-monoidal $\spaces$-module functor $\Pre$.
\end{theorem}

Lurie has proved a conjecture of Mandell \cite[6.3.5.17]{HA} which
implies that for $n>1$, $\bE_n$-algebras admit $\bE_{n-1}$-monoidal
module categories.  Applying Theorem \ref{thm:adj} in the context of
$\R^{\otimes}=\Mod_{R}^{\otimes}$ for an $\bE_{n}$-ring spectrum $R$
($n>1$), in which case $\Pic (\Mod_{R}^{\otimes}) = \Pic_{R}$, now
leads to the following multiplicative characterization of the Thom
spectrum functor in terms of the categorification of $\Pic$.

\begin{theorem}\label{thm:infmain}
The functor of $\bE_{n-1}$-monoidal presentable
$\i$-categories 
\[
    \spaces_{/\Pic_{R}} \to \Mod_{R},
\]
arising from the counit of the adjunction of Theorem~\ref{thm:adj},
is the generalized Thom spectrum functor.
\end{theorem}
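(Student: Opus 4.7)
The plan is to unwind the counit of Theorem~\ref{thm:adj} applied at $\R = \Mod_R$ and match it directly with the defining colimit formula for the Thom spectrum. By the adjunction, the counit
\[
\epsilon \colon \aT[\Pic_R] = \aT_{/\Pic_R} \to \Mod_R
\]
in $\Alg_{\bE_{n-1}}(\PrL)$ is adjoint to the identity map $\id \colon \Pic_R \to \Pic(\Mod_R) = \Pic_R$ in $\Alg_{\bE_{n-1}}^{\mathrm{gp}}(\aT)$.

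First I would invoke the universal property of $\aT[-]$ implicit in its construction from $\aT_{/(-)}$: for any $\i$-groupoid $X$, the $\i$-category $\aT_{/X}$ is freely generated under colimits by the Yoneda-like embedding $j_X \colon X \hookrightarrow \aT_{/X}$ that sends a point $x \in X$ to the representable $\{x\} \to X$. Concretely, every $(Y, f \colon Y \to X)$ admits a canonical presentation as a colimit of representables,
\[
(Y, f) \htp \colim_{y \in Y} j_X(f(y)),
\]
so any colimit-preserving functor $F \colon \aT_{/X} \to \R$ with target a presentable $\i$-category is determined by $F \circ j_X \colon X \to \R$ via $F(Y,f) \htp \colim_{y \in Y} F(j_X(f(y)))$.

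Next, I would apply this to $\epsilon$. The triangle identity for the adjunction of Theorem~\ref{thm:adj} identifies $\epsilon \circ j_{\Pic_R}$ with the canonical inclusion $\iota \colon \Pic_R \hookrightarrow \Mod_R$. Combining the two displays, for any $(Y, f \colon Y \to \Pic_R)$ we obtain
\[
\epsilon(Y,f) \htp \colim\bigl(Y \xra{f} \Pic_R \xra{\iota} \Mod_R\bigr) = Mf,
\]
which is precisely the definition of the generalized Thom spectrum of $f$ recalled in the introduction.

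The hard part will be upgrading this identification from an equivalence of underlying functors to an equivalence of $\bE_{n-1}$-monoidal functors. For this I would appeal to the $\bE_{n-1}$-monoidal enhancement of the universal property of $\aT[-]$, which is built into the monoidal stack of Theorem~\ref{t-th-monoidal-presentable-stack}: both $\epsilon$ and the colimit-defined Thom construction are $\bE_{n-1}$-monoidal colimit-preserving extensions of the inclusion $\iota \colon \Pic_R \hookrightarrow \Mod_R$ along $j_{\Pic_R}$, and so they are canonically equivalent in $\Alg_{\bE_{n-1}}(\PrL)$. This step silently uses the $\bE_{n-1}$-monoidal structure on $\Mod_R$ guaranteed by Mandell's conjecture, which is precisely the source of the hypothesis $n>1$.
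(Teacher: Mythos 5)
Your argument is essentially the paper's own: the remark following Theorem~\ref{thm:picthm} observes that the counit, as a morphism of presentable $\i$-categories, is the colimit-preserving functor determined by the inclusion $\Pic(\R)\hookrightarrow\R$, and the generalized Thom spectrum $Mf$ of $f\colon X\to\Pic_R$ is by definition $\colim(X\to\Pic_R\to\Mod_R)$; unwinding the universal property of $\aT_{/(-)}$ as a free cocompletion and using the triangle identity, exactly as you do, closes the gap. Your explicit triangle-identity step is a useful elaboration of the paper's terse deduction.

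One correction to your final paragraph. You invoke Theorem~\ref{t-th-monoidal-presentable-stack} for the $\bE_{n-1}$-monoidal enhancement, but that theorem concerns the \emph{contravariant} stack $S\mapsto\aC_{/S}^\otimes$, whose monoidal structure is the pointwise (external) one inherited from $\aC^\otimes$. The monoidal structure on $\aT_{/\Pic_R}$ relevant here is instead the Day convolution coming from the $\bE_{n-1}$-structure on the space $\Pic_R$ itself; it is furnished by the \emph{covariant} symmetric monoidal functor $\T[-]\colon\T^\otimes\to(\PrL)^\otimes$ of Proposition~\ref{prop:paramsym}, which is what underlies the free/forgetful adjunction of Theorem~\ref{thm:adj}. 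In fact no separate ``upgrade'' step is needed: the adjunction of Theorem~\ref{thm:adj} lives in $\Alg_{\bE_{n-1}}(\PrL)$, so its counit is automatically a morphism of $\bE_{n-1}$-monoidal presentable $\i$-categories. The entire content of the theorem is the identification of the underlying colimit-preserving functor, which is what your first two displays accomplish.
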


An immediate corollary is the following generalization of Lewis'
theorem about multiplicative structures on Thom spectra:

\begin{theorem}
Let $R$ be an $\bE_{n}$-ring spectrum, with $n>1$.  Then $\Pic_{R}$ is
an $\bE_{n-1}$-space, and if $f\colon X \to \Pic_{R}$ is $\bE_{m}$-monoidal
for some $m<n$, then the Thom spectrum $Mf$ is an $\bE_{m}$-ring spetrum.
\end{theorem}

We also derive a characterization of the multiplicative properties of
the Thom isomorphism.  Lewis showed that an $\bE_n$-orientation gives
rise to an $\bE_n$ Thom isomorphism~\cite[7.4]{lewis-may-steinberger}.
We generalize Lewis' result as follows:

\begin{corollary}\label{thm:thomiso}
Let $R$ be an $\bE_n$-ring spectrum, $n>1$, and let $f \colon
X \to\GL{R}$ be an $\bE_m$-monoidal map for some $m<n$.  Suppose that
$Mf$ admits an $\bE_m$-orientation over a spectrum $R$, i.e., an
$\bE_m$-algebra map $Mf \to R$.  Then the composite of the Thom
diagonal and the orientation 
\[
Mf \to \splus X \sma Mf \to \splus X \sma R
\]
is an equivalence of $\bE_m$-ring spectra.
\end{corollary}

The categorification of the Picard group can itself be categorified to
produce a description of the Brauer group; we give a sketch of this
theory as well as its applications in ``twisted parametrized homotopy
theory''~\cite{douglas} in Appendix~\ref{sec:brauer}.

\subsection{Parametrized homotopy theory and the tangent bundle}

Finally, we note that from another point of view, underlying the
notion of parametrized homotopy theory is Lurie's notion of the
tangent bundle $p\colon T_\aC\to\aC$ of a presentable
$\infty$-category $\aC$ \cite{HA}.  The fiber of $p\colon T_\aC\to\aC$
over the object $S$ of $\aC$ is the stablization of the slice
$\aC_{/S}$ over $S$.  In the topological context, i.e. when $\aC$ is
the $\infty$-category of spaces, then $\aC_{/S}$ is the
$\infty$-category of spaces parametrized over $S$ and
$\Stab(\aC_{/S})$ is the $\infty$-category of spectra parametrized
over $S$.  A map $f\colon S\to T$ induces restriction maps
\[
f^*\colon \Stab(\aC_{/T})\too\Stab(\aC_{/S})
\]
which admit both left and right adjoints, namely left and right Kan
extension, or induction and coinduction, written $f_!$ and $f_*$,
respectively.

\subsection{Acknowledgments}

Our debt to Peter May and Johann Sigurdsson is obvious.  We thank them
also for many useful conversations and correspondence.  We thank David
Ben-Zvi, Dan Freed, Jacob Lurie, Mike Mandell, and Thomas Nikolaus for helpful
conversations and encouragement.  This paper was improved by very
careful readings by Anssi Lahtinen and Aaron Royer.  We are grateful
to Rune Haugseng for a very useful suggestion about pushforward.
Finally, we wish to thank our collaborators Mike Hopkins and Charles
Rezk, without whom this project would not exist.

\section{Background on $\i$-categories}

In this section we give a very brief overview of our use of the
framework of $(\infty,1)$-categories.  There are now many well-studied
models for $\i$-categories, including Rezk's Segal spaces~\cite{Rezk},
the Segal categories~\cite{HirschowitzSimpson, Tamsamani} of Simpson
and Tamsamani, the ``quasicategories'' (weak Kan complexes) of
Boardman and Vogt, and the homotopy theory of simplicial categories as
studied by Dwyer-Kan and Bergner~\cite{DwyerKan, Bergner}.  Many
models are known to be equivalent (see \cite{BergnerCompare} for a
nice discussion of the situation).  Very little of the work of this
paper depends on model-specific details; given certain basic
structural properties, one could carry out most of our arguments in
any of them.  We have chosen to use quasicategories as a model for
$\i$-categories, as developed by Joyal~\cite{Joyal} and Lurie.
Particularly for our study of multiplicative structures, we have found
it convenient to rely on the extensive treatment given in Lurie's
books \cite{HTT, HA}.

\subsection{$\i$-operads and symmetric monoidal $\i$-categories}

\label{sec:ioperads}

We now quickly review the theory of $\i$-operads as we will apply it in the
body of the paper, following \cite[\S 2]{HA}.  Let $\Gamma$ denote the
category with objects the pointed sets $\{*,1,2,\ldots,n\}$ for each
natural number $n\in\mathbb{N}$ and morphisms the pointed maps of
sets.  An $\i$-operad is then specified by an $\i$-category
$\aO^{\otimes}$ and a functor
\[
p \colon \aO^{\otimes} \to \N(\Gamma)
\]
satisfying certain conditions \cite[2.1.1.10]{HA}.

\begin{remark}
This is the generalization of the notion of a multicategory (colored
operad); to obtain the generalization of an operad we restrict to
$\i$-operads equipped with an essentially surjective functor
$\Delta^{0} \to p^{-1}(\{*,1\})$.  To make sense of this, note that
$p^{-1}(\{*,1\})$ should be thought of as the ``underlying''
$\i$-category associated to $\aO^{\otimes}$, which we'd want to contain only a
single (equivalence class of) object if we're interested in studying the
$\i$-version of an ordinary operad.
\end{remark}

The identity map $\N(\Gamma) \to \N(\Gamma)$ is an $\i$-operad; this
is the analogue of the $\bE_\infty$ operad.  More generally, we can
define a topological category $\tilde{\bE}[k]$ \cite[5.1.0.2]{HA} such
that there is a natural functor $\N(\tilde{\bE}[k]) \to \N(\Gamma)$
which is an $\i$-operad.  We refer to the resulting $\i$-operads as the
$\bE_k$ operads.

\begin{remark}
This uses a general correspondence result
which associates to a simplicial multicategory an operadic nerve which
is an $\i$-operad provided that each morphism
simplicial set of the multicategory is a Kan complex
\cite[2.1.1.27]{HA}.  
\end{remark}

A {\em symmetric monoidal} $\i$-category is then an $\i$-category
$\aC^{\otimes}$ equipped with a coCartesian fibration of $\i$-operads
\cite[2.1.2.18]{HA}
\[
p \colon \aC^{\otimes} \to \N(\Gamma).
\]
The ``underlying'' $\i$-category is obtained as the fiber $\aC =
p^{-1}(\{*,1\})$.  In abuse of terminology, we will say that
an $\i$-category $\aC$ is a symmetric monoidal $\i$-category if it is
equivalent to $p^{-1}(\{*,1\})$ for some symmetric monoidal
$\i$-category $\aC^{\otimes}$.  More generally, if $\aO^{\otimes}$ is
an $\infty$-operad and $\aC^{\otimes} \to \aO^{\otimes}$ is a
coCartesian fibration of $\i$-operads such that the composite
\[
\aC^\otimes\to\aO^\otimes\to\N(\Gamma)
\]
exhibits $\aC^\otimes$ as an $\i$-operad \cite[2.1.2.13]{HA}, then
$\aC$ is an $\aO$-monoidal $\i$-category.

Given a symmetric monoidal model category $\aC$, we can associate a
symmetric monoidal $\i$-category
$\N(\aC^\mathrm{c})[W^{-1}]^{\otimes}$ with underlying $\i$-category
$\N(\aC^\mathrm{c})[W^{-1}]$ \cite[4.1.3.6]{HA}.  (See
Appendix~\ref{sec:comparison} for further discussion of the passage
from model categories to $\i$-categories.)

For symmetric monoidal $\i$-categories $\aC^{\otimes}$ and
$\aD^{\otimes}$, we have two associated categories of functors between
them: 
\begin{enumerate} 
\item The $\i$-category of $\i$-operad maps $\Alg_{\aC}(\aD)$, which
  should be thought of as the analogue of lax symmetric monoidal
  functors \cite[2.1.2.7]{HA}, 
\item and the $\i$-category $\Fun^{\otimes}(\aC,\aD)$ of symmetric
  monoidal functors, which should be regarded as strong symmetric
  monoidal functors \cite[2.1.3.7]{HA}.
\end{enumerate}

For a fibration $q \colon \aC^{\otimes} \to \aO^{\otimes}$ of
$\i$-operads and a map of $\i$-operads
$\alpha \colon \aO'^{\otimes} \to
\aO^{\otimes}$, we define an $\aO'$-algebra object of $\aC$ over $\aO$
to be a map of $\i$-operads
$A \colon \aO'^{\otimes} \to \aC^{\otimes}$ over $\aO$ such that
$q \circ A$ is $\alpha$ \cite[2.1.3.1]{HA}.  The $\i$-category of
$\aO'$-algebra objects in $\aC$ over $\aO$, denoted $\Alg_{\aO'
/ \aO}(\aC)$, is the full subcategory of the functor category
$\Fun_{\aO^{\otimes}}(\aO'^{\otimes}, \aC^{\otimes})$ spanned by the
maps of $\i$-operads.  When $\aO$ is the commutative $\i$-operad,
$\Alg_{\aO' / \aO}(\aC) \htp \Alg_{\aO'}(\aC)$.  When $\alpha$ is the
identity map we denote this by $\Alg_{/ \aO}(\aC)$.  When in addition
$\aO$ is the commutative $\i$-operad, we will write
$\textrm{CAlg}(\aC)$ to denote the category $\Alg_{/ \aO}(\aC)$ ---
these are the commutative algebra objects in $\aC$.

A particularly interesting class of symmetric monoidal structures on
$\i$-categories come from {\em cartesian monoidal structures}.  Any
$\i$-category with finite products admits a unique cartesian symmetric
monoidal structure; the monoidal product is given by the categorical
product \cite[\S 2.4.1]{HA}.  When $\aC$ is a cartesian symmetric
monoidal $\i$-category and $\aO$ is an $\i$-operad, we often write
$\Mon_{\aO}(\aC)$ in place of $\Alg_{/ \aO}(\aC)$ \cite[\S 2.4.2]{HA}.

Finally, we will primarily be interested in algebras over $\i$-operads
which are {\em unital} and {\em coherent}.  A unital
$\i$-operad \cite[2.3.1.1]{HA} has an essentially unique ``nullary''
operation.  For instance, the $\bE_n$ operads are unital for any $n$.
As one might expect, algebras over unital operads are equipped with
well-behaved unit maps.  Coherent $\i$-operads $\aO^\otimes$ \cite[\S
3.3.1]{HA} satisfy conditions so that the categories of modules over
$\aO$-algebras have reasonable multiplicative properties.

\subsection{The $\i$-category of presentable $\i$-categories}

In this section, we quickly review the $\i$-category of presentable
$\i$-categories and introduce some notation.  An $\i$-category $\aC$
is presentable if there exists a regular cardinal $\kappa$ and a small
$\i$-category $\aD$ with $\kappa$-small colimits such that
$\aC \htp \Ind_{\kappa}(\aD)$; i.e., $\aC$ is the free completion of
$\aD$ under $\kappa$-filtered colimits~\cite[5.5.1.1]{HTT}.  The theory
of presentable $\i$-categories is an analogue of the theory of
combinatorial model categories.

We will be working with functors to the $\i$-category of presentable
$\i$-categories.  Recall from \cite[\S 6.3.1]{HA} that the
$\i$-category $\PR^\L$ of presentable $\i$-categories and left adjoint
functors is complete and cocomplete, with limits created in $\Cat_\i$.
The $\i$-category $\PR^{\L}$ is closed, as the $\i$-category of
functors $\Fun^{\L}(\aC,\aD)$ is itself a presentable
$\i$-category~\cite[5.5.3.8]{HTT}.  Furthermore, $\PR^{\L}$ is
tensored over spaces, with a convenient description of the tensor: 
Given a presentable $\i$-category $\aC$ and a space $S$, the
presentable $\i$-category $S\otimes\aC$ is naturally equivalent
$\Fun(S^{\op},\aC)$.

In addition, $\PR^\L$ can be given the structure of a symmetric
monoidal $\i$-category, with unit $\spaces$, the $\i$-category of spaces.
The fact that $\spaces$ is the unit implies that it is canonically a
commutative algebra object and that the forgetful functor 
\[
\Mod_{\spaces}(\PR^\L)\to\PrL
\]
is an equivalence, where here $\Mod_{\spaces}
= \Mod_{\spaces}^\mathrm{Comm}(\Pr^\L)$ is the $\i$-category of
modules over $\spaces$ in the symmetric monoidal structure on
$Pr^\L$.

The $\i$-category $\PR^\L$ has a subcategory the $\i$-category
$\stabprcat$ of stable presentable $\i$-categories and
colimit-preserving functors.  Recall that the $\i$-category
$\stabprcat$ of stable presentable $\i$-categories also admits a
symmetric monoidal structure with unit the $\i$-category $\ispec$ of
spectra \cite[\S 6.3.1]{HA}.  There is a map of commutative algebra
objects $\spaces\to\spaces_*\to\spectra$ of $\PR^\L$, where
$\spaces_*$ is the $\i$-category of pointed spaces.  Both of these
maps are ``localizations'' in the sense that the endofunctors
$(-)\otimes_{\spaces} \spaces_*$ and $(-)\otimes_{\spaces} \spectra$
of $\Mod_{\spaces}$ are idempotent: clearly
$\spaces_*\otimes_{\spaces} \spaces_*\simeq\spaces_*$, and the same is
true for $\spectra$ since $\spectra\simeq\spaces_*[\Sigma^{-1}]$.

Given an arbitrary $\i$-operad $\aO^\otimes$, a (not necessarily
small) $\aO$-monoidal $\i$-category is an object of
$\Alg_\aO(\widehat{\Cat}_\i)$, and an $\aO$-monoidal presentable
$\i$-category is an object of $\Alg_{\aO}(\PR^{\L})$.  In particular,
associated to an $\bE_n$ algebra $R$ in a symmetric monoidal
$\i$-category $\aC$, there is an $\i$-category $\Mod_R$ of right
$R$-modules.  A central theorem in the subject (verifying a conjecture
of Mandell) is that an $\bE_n$-algebra $R$ induces an
$\bE_{n-1}$-monoidal structure on $\Mod_R$ with unit $R$ such that the
tensor product commutes with colimits in each variable; moreover, the
functor $R \mapsto \Mod_R$ from $\bE_n$-algebras to
$\bE_{n-1}$-monoidal $\i$-categories is
fully-faithful~\cite[6.3.5.17]{HA}.  Furthermore, in an
$\bE_n$-monoidal $\i$-category $\aC$, for $m \leq n$ the map of
$\i$-operads $\bE_m \to \bE_n$ implies that we have an $\bE_{n-m}$
monoidal $\i$-category $\Alg_{\bE_{m}/\bE_n}(\aC)$ of $\bE_m$ algebra
objects.

\section{Parametrized spaces and spectra}
\label{sec:wirthmuller-contexts}

In this section, we review explicit $\i$-categorical models of
parametrized spaces, spectra, and $R$-modules for an $\bE_n$-ring
spectrum $R$ from~\cite{ABGHR, ABG, ABGHR1}.  Our main goal is to
provide the necessary background for Section~\ref{sec:twisted}.  In
Section~\ref{sec:param}, we develop the general theory of arbitrary
$\i$-categories parametrized over arbitrary $\i$-topoi, which
specializes to the definitions given in this section.

We begin with the definition of parametrized spaces.

\begin{definition}
Let $S$ be a Kan complex, which we view as an object of the $\i$-category of $\i$-groupoids.
Define the $\i$-category of spaces over $S$ as the $\i$-category
\[
\spaces_{/S} = \Fun(S^{\op}, \spaces)
\]
of presheaves of spaces on $S$, and the $\i$-category of pointed
spaces over $S$ (or ex-spaces) as the $\i$-category 
\[
(\spaces_{/S})_* = \Fun(S^{\op},\spaces)_*\simeq\Fun(S^{\op},\spaces_*) =
(\spaces_*)_{/S}
\]
of presheaves of pointed spaces on $S$.
\end{definition}

\begin{remark}
The $\i$-category of $\i$-categories admits an autoequivalence $(-)^{\op}$ which sends $\C$ to $\C^{\op}$, which when restricted to the full subcategory of $\i$-groupoids comes equipped with a natural equivalence $(-)^{\op}\to\id$.
This follows, for instance, from the fact the $\i$-category of $\i$-groupoids can be modeled by the full subcategory of simplicially-enriched categories consisting of the simplicial groupoids. In particular, any $\i$-groupoid $S$ comes equipped with a canonical involution $S^{\op}\simeq S$, so that $\Fun(S^{\op},\spaces)\simeq\Fun(S,\spaces)$.
For this reason, we will sometime ignore the $(-)^{\op}$ and write $\spaces_{/S}\simeq\Fun(S,\spaces)$.
The same goes for presheaves on $S$ valued in an arbitrary $\i$-category $\C$, such as pointed spaces or spectra.
\end{remark}


\begin{definition}
Let $S$ be a Kan complex.  The $\i$-category of spectra over $S$
is defined to be the $\i$-category 
\[
\ispec_{/S} = \Fun(S^{\op}, \ispec)
\]
of presheaves of spectra on $S$.
\end{definition}

We would expect that $\i$-category $\ispec_{/S}$ of spectra
parametrized over $S$ can be understood as the stabilization of the
$\i$-category $\spaces_{/S}$ (or $(\spaces_{/S})_*$) of (pointed) spaces
parametrized over $S$.  Indeed, we have the following proposition.

\begin{proposition}
Let $S$ be a space and let $\aC$ be a presentable $\i$-category. 
Then we have a natural equivalence
\[
\Stab(\aC_{/S})\simeq\Stab(\aC)_{/S}
\]
of stable presentable $\i$-categories.
Moreover, if $\aC$ is an $\cO$-monoidal presentable $\i$-category for
some $\i$-operad $\cO$, then this equivalence extends to an
equivalence of $\cO$-monoidal stable presentable $\i$-categories. 
\end{proposition}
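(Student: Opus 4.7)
The plan is to exploit the symmetric monoidal structure on $\PrL$ to reduce the statement to an associativity isomorphism for the tensor product, rather than arguing directly with stabilizations of presheaf categories.

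First, I would use the lemma established earlier in the section, which identifies the presheaf $\i$-category $\aC_{/S} = \Fun(S^{\op},\aC)$ with the tensor product $S \otimes \aC$ in $\PrL$, where $\T$ acts as the unit. Second, I would invoke the standard identification (from \cite[6.3.1.22]{HA} or thereabouts) $\Stab(\aC) \simeq \aC \otimes \Sp$ in $\PrL$, valid for any presentable $\i$-category $\aC$; this expresses stabilization as smashing with the unit algebra $\Sp$ in the symmetric monoidal structure on $\Mod_\T = \PrL$.

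Given these two identifications, the equivalence is essentially formal: using symmetry and associativity of the tensor product in $\PrL$, we compute
\[
\Stab(\aC_{/S}) \simeq (S \otimes \aC) \otimes \Sp \simeq S \otimes (\aC \otimes \Sp) \simeq S \otimes \Stab(\aC) \simeq \Stab(\aC)_{/S}.
\]
Each equivalence is an instance of either the defining lemma for $\aC_{/S}$ or the identification of stabilization with smashing with $\Sp$, together with the coherent associator and unitor isomorphisms of the symmetric monoidal $\i$-category $\PrL$. Since $\Sp$ is a commutative algebra object of $\PrL$, the resulting stable presentable $\i$-category is canonically that of $\Stab(\aC)$-valued presheaves on $S$.

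For the monoidal refinement, I would promote the argument to $\Alg_\aO(\PrL)$. Since the tensoring of $\PrL$ over $\aT$ is compatible with the symmetric monoidal structure, it lifts to a tensoring of $\Alg_\aO(\PrL)$ over $\aT$; similarly, the smashing functor $(-) \otimes \Sp$ is symmetric monoidal on $\PrL$ and therefore takes $\aO$-monoidal presentable $\i$-categories to $\aO$-monoidal presentable $\i$-categories (via the argument in \cite[\S 4.8]{HA} that $\Sp$ is idempotent). The string of equivalences above then holds in $\Alg_\aO(\PrL)$, giving the required equivalence of $\aO$-monoidal stable presentable $\i$-categories. The only real subtlety is ensuring that the identification $\Stab(\aC) \simeq \aC \otimes \Sp$ is $\aO$-monoidal when $\aC$ is; this is the main obstacle and follows from the universal property of $\Sp$ as the initial stable presentable $\i$-category (equivalently, from the fact that $\Sp \otimes -$ is a smashing localization of $\PrL$ and hence preserves algebra structures over any $\i$-operad).
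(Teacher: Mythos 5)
Your proof is correct and follows essentially the same route as the paper: both identify $\aC_{/S}\simeq S\otimes\aC$ in $\PrL$, use $\Stab(-)\simeq(-)\otimes\cS$ as a symmetric monoidal (smashing) localization, and conclude by associativity of $\otimes$, with the $\aO$-monoidal refinement deduced from the compatibility of these functors with algebra objects. The only cosmetic difference is in how the monoidal refinement is justified: the paper points to its Proposition~\ref{prop:paramsym} (that $\T[-]\colon\T^\otimes\to(\PrL)^\otimes$ is symmetric monoidal), whereas you argue directly that the $\T$-tensoring lifts to $\Alg_\aO(\PrL)$ and that $(-)\otimes\Sp$, being symmetric monoidal, preserves $\aO$-algebras; both articulations rest on the same underlying structural facts.
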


\begin{proof}
First recall that the functor $\Stab(-) \simeq (-)\otimes \ispec$ is a
symmetric monoidal localization of $\Pr^\L$ whose essential image is
precisely the full subcategory of stable presentable $\i$-categories. 
Thus
\[
\Stab(\aC_{/S})\simeq (S\otimes\aC)\otimes\spectra\simeq
S\otimes(\aC\otimes\spectra)\simeq\Stab(\aC)_{/S}. 
\]
The final assertion follows from the fact that $(-)_{/S}$ is also 
symmetric monoidal, which we prove as proposition~\ref{prop:paramsym} 
below.  
\end{proof}

Finally, we describe parametrized module spectra.  

\begin{definition}\label{defn:modbundle}
Let $R$ be an $\bE_1$-ring spectrum, let $S$ be a space, and let
$\Mod_R$ denote the stable presentable $\i$-category of right $R$-modules.
The $\i$-category of parametrized $R$-module spectra (i.e., bundles of
$R$-modules) over $S$ is the stable presentable $\i$-category
$(\Mod_R)_{/S}$.  
\end{definition}

In practice, however, $R$ is often more than an $\bE_1$-ring spectrum;
rather, it may be an $\bE_n$-ring spectrum for some $1\leq
n\leq\infty$.  In this case, the category of parametrized module
spectra inherits a multiplicative structure where the product
combines the product on $R$-modules with the diagonal map on the base
space.

\begin{proposition}
Let $R$ be an $\bE_n$-ring spectrum, $n>0$, and let $S$ be a space.
Then the $\i$-category $(\Mod_R)_{/S}$ of parametrized $R$-module spectra
over $S$ is the underlying $\i$-category of an $\bE_{n-1}$-monoidal
stable presentable $\i$-category $(\Mod_R)_{/S}^\otimes$. 
\end{proposition}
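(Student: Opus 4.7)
The plan is to invoke Theorem \ref{thm:paramobj} with $\aO^\otimes = \bE_{n-1}^\otimes$ and $\aC^\otimes = \Mod_R^\otimes$. The only thing that needs to be checked is that $\Mod_R^\otimes$ actually makes sense as an $\bE_{n-1}$-monoidal presentable $\i$-category, and this is precisely the content of the Lurie--Mandell theorem \cite[6.3.5.17]{HA} cited in the interlude on $\i$-operads: an $\bE_n$-algebra $R$ in the symmetric monoidal presentable $\i$-category $\ispec$ determines an $\bE_{n-1}$-monoidal presentable $\i$-category $\Mod_R^\otimes$ whose underlying $\i$-category is $\Mod_R$, and whose tensor product preserves colimits separately in each variable.

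Having $\Mod_R^\otimes \in \Alg_{\bE_{n-1}}(\PrL)$ in hand, Theorem \ref{thm:paramobj} immediately produces a stack of $\bE_{n-1}$-monoidal presentable $\i$-categories
\[
(\Mod_R)_{/(-)}^\otimes \colon \aT^{\op} \longrightarrow \Alg_{\bE_{n-1}}(\PrL)
\]
on $\aT$. Its value at the Kan complex $S$ is the $\bE_{n-1}$-monoidal presentable $\i$-category $(\Mod_R)_{/S}^\otimes$ of $\Mod_R^\otimes$-valued presheaves on $S$. By construction the underlying $\i$-category agrees with the one given in Definition \ref{defn:modbundle}: forgetting the monoidal structure corresponds to postcomposing with the projection $\Alg_{\bE_{n-1}}(\PrL) \to \PrL$, which gives back the stack of Proposition \ref{prop:2ff} applied to $\Mod_R$.

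Stability of $(\Mod_R)_{/S}^\otimes$ is automatic since $\Mod_R$ is stable and stability is inherited by functor $\i$-categories; equivalently, since $\Mod_R$ is a module over $\ispec$ in $\PrL$, the tensoring $S \otimes \Mod_R \simeq \Fun(S^{\op}, \Mod_R)$ is again an $\ispec$-module, hence stable.

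There is no real obstacle here beyond appealing to the results already established. The only subtle point is the appeal to \cite[6.3.5.17]{HA} to pass from an $\bE_n$-ring spectrum to an $\bE_{n-1}$-monoidal $\i$-category of modules; everything else is a formal specialization of Theorem \ref{thm:paramobj}.
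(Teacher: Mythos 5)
Your proposal is correct and follows essentially the same approach as the paper: both rest on the Lurie--Mandell theorem \cite[6.3.5.17]{HA} to put $\Mod_R^\otimes$ in $\Alg_{\bE_{n-1}}(\PrL)$ and then pass to the parametrized setting, the paper by spelling out the pointwise monoidal structure $\Fun(S^{\op},\Mod_R^\otimes)\times_{\Fun(S^{\op},\N(\Gamma))}\N(\Gamma)$ directly and you by invoking Theorem~\ref{thm:paramobj}, which packages exactly that construction. Your closing observation that stability is inherited by the functor $\i$-category matches the paper's treatment.
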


\begin{proof}
Let $\Mod_R$ denote the $\i$-category of right $R$-modules, which
is an $\bE_{n-1}$-algebra object of $\PrL$ and in particular an
$\bE_{n-1}$-monoidal $\i$-category.  Then
\[
\Fun(S^{\op},\Mod_R)^\otimes=\Fun(S^{\op},\Mod_R^\otimes)\underset{\Fun(S^{\op},\N(\Gamma))}{\times}\N(\Gamma)
\]
is an $\bE_{n-1}$-monoidal $\i$-category such that the underlying
$\i$-category 
\[
(\Mod_R)_{/S}=\Fun(S^{\op},\Mod_R)
\]
is stable and presentable.
By construction, the operations in $\Fun(S^{\op},\Mod_R)^\otimes$ are
computed pointwise, so that the tensor product commutes with colimits
in each variable.  Hence
$(\Mod_R)_{/S}^\otimes=\Fun(S^{\op},\Mod_R)^\otimes$ is an
$\bE_{n-1}$-monoidal stable presentable $\i$-category.
\end{proof}

\section{Twisted cohomology theories and the twisted Umkehr map}\label{sec:twisted}

Let $f\colon X\to B$ be a bundle of smooth manifolds, and let $T_{f}$ be
the bundle of tangent vectors along the fiber, say of
dimension $d$.  The Pontryagin-Thom construction gives rise to a
stable map 
\begin{equation}\label{eq:10}
   \PT (f)\colon    \splus B \xra{} X^{-T_{f}},
\end{equation}
which we'll call the ``Pontryagin-Thom transfer'' associated to $f$.  
In the presence of a Thom isomorphism 
\[
     R^{*} (\splus X) \xra{} R^{*-d} (X^{-T_{f}}) 
\]
one then has an Umkehr homomorphism
\[
     R^{*} (\splus X) \xra{} R^{*-d} (\splus B).
\]
A number of contexts, ranging from the $K$-theoretic analysis of
anomalies in string theory to the Umkehr map in Grojnowski's
equivariant elliptic cohomolgy, with its role in the proof of Witten's
rigidity theorems and the derivation of the Kac-Weyl character formula
\cite{fw:asd,MR2438341,Rosu:Rigidity,Ando:AESO,MR3230851}, suggest the
following generalization.   Suppose that $R$ is an $\einfty$ ring
spectrum, and let $\alpha\colon B \to \Pic_{R}$ classify an invertible
$R_{B}$-module.  One can ask for a ``twisted Umkehr map'' 
\[
     R^{*+T_{f}-\alpha} (\splus X) \xra{} R^{*-\alpha} (\splus B).
\]
For example if $-T_{f}+\alpha$ is null homotopic, then a choice of
null-homotopy determines a map
\[
    R^{*} (\splus X) \xra{} R^{*-\alpha} (\splus B), 
\]
and if $\alpha$ itself is null, one recovers the Umkehr map.

In this section we construct such a twisted Umkehr map.  The key
points are:
\begin{enumerate}
\item The Pontryagin-Thom transfer map \eqref{eq:10} arises from a map
of spectra  
\[
       \bS_{B} \xra{} D_{B} X
\]
over $B$ by pushforward along the map $p\colon B \to \ptspace$,
and the twisted Umkehr map arises by smashing with the
bundle classified by $\alpha$
\[
      \alpha  \xra{} \alpha \Smash_{B} D_{B} X
\]
and then pushing forward along $p$, 
\[
    p_{!} \alpha \xra{} p_{!} (\alpha \Smash_{B} D_{B} X).
\]
\item Identifying $p_{!} D_{B} (X) \heq X^{-T_{f}}$ requires a
parametrized version of Atiyah duality 
\[
     D_{B} X \heq \bS_{B}^{-T_{f}}
\]
from which one concludes that 
\[
p_{!}D_{B} X \heq X^{-T_{f}}.
\]
\end{enumerate}

\subsection{The Becker-Gottlieb transfer}\label{sec:bg}

We begin by recalling from \cite[\S 15.3]{MS} the construction of the
Becker-Gottlieb transfer in the setting of parametrized homotopy
theory.  The transfer map arises from the categorical trace
associated to the diagonal map of a stably dualizable space $X$.
Specifically, we have the composite
\[
\xymatrix{
\bS \ar[r]^-{\eta} & X \sma DX \ar[r] & DX \sma X \ar[r]^-{\id \sma
  \Delta} & DX \sma X \sma X \ar[r]^-{\epsilon \sma \id} & \bS \sma X \simeq
X,
}
\]
where $\eta$ and $\epsilon$ are the coevaluation and evaluation of the duality.
These transfer maps satisfy a series of compatibility relations, see
\cite[15.2.4]{MS}; the required conditions on the triangulation
of the homotopy category hold here, either by comparison or as can be
shown directly.

The key observation about duality in the parametrized setting is that
we can characterize dualizability fiberwise because the smash product is computed pointwise.

\begin{lemma}
Let $B$ be a space and $X\in\Fun(B^{\op},\spectra)$ a parametrized
spectrum.  Then $X$ is dualizable if and only if for each $b \in B$,
the value $X_b$ of $X$ at $b$ is a dualizable spectrum.
\end{lemma}

In particular, given a map $f\colon X \to B$ of spaces with stably dualizable
(homotopy) fibers, such as a proper fibration, by adjoining a disjoint basepoint we get a diagonal map on $\Sigma^{\infty}_B X_+$ and so a transfer map
\[
\bS_B \to \Sigma^\infty_B X_+.
\]
Pushing forward along the map $B \to *$ now yields the classical
transfer map
\[
\splus B \to \splus X.
\]

Note that we can easily recover Dwyer's generalization of the
transfer \cite{Dwyer}.  Specifically, let $R$ be an
$\bE_\infty$-ring spectrum and suppose that $f \colon E \to B$ has
homotopy fiber $F$ such that $R \sma \Sigma^\infty_+ F$ is a dualizable
object in the category of $R$-modules.  Then
the construction of the transfer in this setting gives rise to an
$R$-module transfer map 
\[
R \sma \splus B \to R \sma \splus E.
\]

Summarizing, we obtain the following result of Dwyer.

\begin{proposition}
Let $R$ be an $\bE_\infty$-ring spectrum and let $f \colon E \to B$ be a
map of spaces with homotopy fiber $F$ such that
$R \sma \Sigma^{\infty}_+ F$ is a dualizable object in $\Mod_R$.  Then
the diagonal map on $E$ gives rise to a map of $R$-modules over $B$
\[
R_B \to R \sma \Sigma^{\infty}_B E_+
\]
such that the pushforward along $B \to *$ is the $R$-module transfer
\[
R \sma \splus B \to R \sma \splus E.
\]
\end{proposition}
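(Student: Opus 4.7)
The plan is to mimic the construction of the parametrized Becker–Gottlieb transfer sketched at the start of Section \ref{sec:bg}, now working inside the symmetric monoidal stable $\infty$-category $(\Mod_R)_{/B}$ of parametrized $R$-modules over $B$ rather than $\spectra_{/B}$. The proof is then essentially parallel to the spectrum case.

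First I would check that $X = R \sma \Sigma^\infty_B E_+$, regarded as an object of $(\Mod_R)_{/B}$, is dualizable. The symmetric monoidal structure on $(\Mod_R)_{/B}$ is computed pointwise (this is the $\Mod_R$-valued analogue of the preceding lemma, with the same one-line proof), so dualizability of a parametrized $R$-module can be detected fiberwise. The fiber of $X$ over $b \in B$ is $R \sma \Sigma^\infty_+ F_b$, which is dualizable in $\Mod_R$ by the hypothesis on the homotopy fibers of $f$.

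Next, the fiberwise diagonal $E \to E \times_B E$ of spaces over $B$ induces a diagonal $\Delta \colon X \to X \sma_{R_B} X$ in $(\Mod_R)_{/B}$. Feeding $\Delta$ together with the evaluation $\epsilon$ and coevaluation $\eta$ witnessing the dualizability of $X$ into the categorical trace composite
\[
R_B \xrightarrow{\eta} X \sma_{R_B} DX \simeq DX \sma_{R_B} X \xrightarrow{\mathrm{id}\sma\Delta} DX \sma_{R_B} X \sma_{R_B} X \xrightarrow{\epsilon \sma \mathrm{id}} R_B \sma_{R_B} X \simeq X
\]
produces the asserted map $\tau_{/B} \colon R_B \to X$ of parametrized $R$-modules over $B$.

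Finally, applying the pushforward $p_! \simeq \colim_B$ along $p \colon B \to \ast$ yields the desired map $R \sma \splus B \to R \sma \splus E$, using that $\colim_B R_B \simeq R \sma \splus B$ and $\colim_B (R \sma \Sigma^\infty_B E_+) \simeq R \sma \splus E$ (because $\Sigma^\infty_+$ commutes with colimits). The identification of $p_!(\tau_{/B})$ with the $R$-module Becker–Gottlieb transfer recalled in the paragraph preceding the proposition is essentially tautological, since that transfer was itself obtained by running the categorical trace formalism on the parametrized dualizable object $X$ and then pushing forward. The main technical point, and the principal obstacle, is to verify that the triangulated-categorical compatibilities among the categorical trace recorded in \cite[15.2.4]{MS} hold in $(\Mod_R)_{/B}$; but this is automatic because $(\Mod_R)_{/B}$ is a stable presentable symmetric monoidal $\infty$-category, and such compatibilities follow formally from the $\infty$-categorical duality theory without further input.
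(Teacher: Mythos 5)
Your proposal is correct and takes essentially the same approach as the paper: verify fiberwise dualizability of $R \sma \Sigma^\infty_B E_+$ using the pointwise-computed monoidal structure, run the categorical trace (diagonal plus duality data) in $(\Mod_R)_{/B}$ to produce $R_B \to R \sma \Sigma^\infty_B E_+$, and then apply $p_!$ (which commutes with colimits) to identify the result with the $R$-module transfer. The paper itself states this proposition as a summary of the preceding two paragraphs of Section 10.1 rather than with a separate proof, so your argument simply makes that implicit reasoning explicit, including the same appeal to the $\infty$-categorical duality formalism to dispose of the triangulated-category compatibility conditions.
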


\subsection{Duality and the Pontryagin-Thom map}
\label{sec:dual-pontrj-thom}

The simple construction of the transfer map of the previous section
does not reveal the powerful relationship of the transfer to geometry
and analysis.  That relationship is mediated by the Umkehr map and
geometric constructions of the dual.

As above, recall that $\spaces$ denotes the $\i$-category of spaces and
$\spectra$ denotes the $\i$-category of spectra.  We let $\bS$ denote
the sphere spectrum and, for a space $X$, we write $DX$ for the
Spanier-Whitehead dual 
\[
   DX = F(\splus X,\bS)
\]
of the spectrum $\splus X$. We may regard 
\[
D\colon \spaces^{op} \to \spectra
\]
as a presheaf of spectra $\spaces$.  Applying $D$ to the unique map of spaces
$p\colon X\to \ptspace$
gives a map of spectra 
\[
\phi (X)\colon \bS\to \splus X;
\]
which we may regard as a functor
\[
    \phi\colon  \spaces^{\op} \to \spectra_{\bS_{/}},
\]
where $\spectra_{\bS_{/}}$ denotes the category of spectra under $\bS$.
If $X$ is a compact manifold, then the Pontryagin-Thom construction and
Aityah duality give a wonderful formula for this map. 
Take an embedding $X \to \bR^{N}$ with normal bundle $\nu_{X}$, and
form the Pontryagin-Thom construction (collapse to a point the
complement of a tubular neighborhood of $X$ in $\bR^{N}$)  to get a
map  
\[
    S^{N} \to X^{\nu_{X}}.
\]
Desuspending $N$ times gives a stable map, the 
Pontryagin-Thom map, 
\[
\PT\colon     \bS \to X^{-T},
\]
where $T$ denotes the tangent bundle of $X$.

\begin{proposition}
There is an equivalence $X^{-T} \to D (X)$ such that the diagram 
\[
\xymatrix{
{\bS}
 \ar[r]^-{\PT}
 \ar[dr]_{\phi (X)}
& 
{X^{-T}} 
 \ar[d]
\\
& 
{D (X)}
}
\]
commutes up to homotopy.  
\end{proposition}

Suppose that $R$ is an $\einfty$ ring spectrum and we have a Thom
isomorphism  
\[
R^{*} (X_{+})\iso R^{*-d} (X^{-T}).
\]
The Umkehr map associated to the map $\pi_{X}\colon X\to \ptspace$ and
the Thom isomorphim is the composition
\[
\xymatrix{
{R^{*} (X_{+})}
 \ar[r]
&
{R^{*-d} (X^{-T})}
 \ar[dr]^{PT^{*}}
 \ar[d]_{\iso}
\\
&
{R^{*-d} (DX)}
 \ar[r]_{\phi (X)}
&
{R^{*-d} (S^{0}).}
}
\]

Now suppose that $f\colon X \to B$ is a smooth and proper family of
manifolds over $B$; in other words, for each $b\in B$, the fiber $X_b$
is a smooth and proper manifold which varies continuously over $B$ in
the sense that $X$ is classified by a map $B^{\op} \to\Man$, where
$\Man$ denotes the $\i$-category of smooth and proper manifolds and
diffeomorphisms.  Here $\Man$ can be described as the coherent nerve
of the ordinary category of smooth compact manifolds and
diffeomorphisms.
Observe that $\Man$ is an $\infty$-groupoid which, when regarded as a space, decomposes as the sum
\[
\Man\simeq\coprod_{[M]}B\mathrm{Diff}(M),
\]
indexed over diffeomorphism classes of smooth
manifolds, where $\mathrm{Diff}(M)$ denotes the (topological) group of
diffeomorphisms of a representative $M$ for the class $[M]$.
If $B=BG$ is
connected, then this amounts to an $A_\infty$-map
$G\to\mathrm{Diff}(M)$ for some smooth and proper manifold $M$, and 
$X\simeq M/G$ is the homotopy quotient of $M$ by its $G$-action.  


The composition 
\[
    B^{\op} \xra{} \Man \xra{} \spaces \xra{\phi} \spectra_{\bS/}
\]
gives an object of $(\spectra_{\bS/})_{/B} \heq
(\spectra_{/B})_{\bS_{B}/}$, that is, a $B$-parametrized spectrum
\[
     \phi_{X/B}\colon \bS_{B} \xra{} D_{B} (X),
\]
under
$\bS_{B}$
whose value at $b \in B$ is 
\[
     \phi (X_{b})\colon \bS  \xra{} D (X_{b}).
\]
This is the fiberwise dual of the map $X_{b}\to \ptspace$.

Pushing forward along $p\colon B\to \ptspace$, we obtain a map 
\[
  \splus B \heq p_{!} p^{*} \bS  \heq  p_{!}\bS_{B} \xra{} p_{!}D_{B} (X).
\]
To identify $p_{!}D_{B} (X)$ with $X^{-T_{f}}$, and so obtain the
Pontryagin-Thom transfer map
\eqref{eq:10} as advertised, we need a parametrized form of Aityah duality.

\subsection{Parametrized manifolds and fiberwise Atiyah-Milnor-Spanier
duality}\label{sec:geom}

If $X$ is a space, we continue to write $DX = F (\splus X,
\bS)$ for the dual of $\splus X$, and, if $Z$ is a spectrum, we write
$DZ$ for the dual $F 
(Z,\bS)$.  

The usual construction of the Atiyah duality
equivalence
$
       X^{-TX}\to DX
$
does not have attractive functoriality and naturality properties, and
so it is not straightforward to assemble the fiberwise maps.  We give
a new approach to work of \cite{Hu} and \cite{MS}, which shows
that the fiberwise dual of a bundle of manifolds $X\to B$ can be 
calculated by a parametrized version of the Pontryagin-Thom
construction. 

Again let $f\colon X\to B$ be a continuous family of smooth and proper
manifolds,  classified by a map $B\to\Man$, the $\i$-category of
smooth and proper manifolds.  Let
\[
T_f=T_{X/B}
\]
denote the bundle of tangents along the fiber of $f\colon X\to B$.  We write
$\bS_X^{T_{X/B}}$ or $\bS_X^{T_{f}}$ for the suspension spectrum of
the associated sphere bundle; it is a bundle of spectra over $X$.  

We will show that $f_{!} \Sigma_B^V \bS_{B}^{-T_{f}}$ is naturally equipped
with an equivalence to $D_{B} X$, where here $V$ is a Euclidean
space in which the fiber $F$ embeds.  Our approach relies on the observation
(which we learned from \cite{Hu}) that the suspension spectrum of
the cofiber 
\[
   C_f = \hocofib(X\times_{B}X - \Delta \xra{} X\times_{B}X) \htp (X\times_{B}X/X\times_{B}X - \Delta),
\]
gives a model for $\bS_{X}^{T_{X}}$, where $\Delta$ denotes the image of the diagonal $X\to X\times_B X$ and we regard $X \times_{B} X$
as a space over $B$ via the projection onto the first coordinate; this
makes the diagonal into a map over $B$.

We begin by considering the case in which $B = *$.  Then the classical
observation (e.g., see~\cite[\S10]{MilnorStasheff} or
\cite[\S12]{Bredon}) that for a compact 
manifold $M$, the normal bundle of the diagonal embedding
$M \to M \times M$ is homeomorphic to the tangent bundle
of $M$ implies the following lemma.

\begin{lemma}
The (homotopy) cofiber $C_f = \Sigma_X^{\infty} (X \times X /
X\times X - \Delta)$ is equivalent to the tangent sphere bundle
$\bS_X^{T}$.
\end{lemma}

Since the case when $B = *$ describes the fiber in the general case,
this observation now yields the following general description.

\begin{corollary}
The (homotopy) cofiber $C_f = \Sigma_X^{\infty} (X \times_B X /
X\times_{B}X - \Delta)$ is equivalent to the tangent sphere bundle
$\bS_X^{T_{X/B}}$.
\end{corollary}

We now turn to analyzing the dual of $C_f$.  Again, we begin by studying
the case in which $B = *$, so that $X$ is just a smooth and proper manifold. We choose a smooth embedding of $X$ in 
a Euclidean space $V$ with normal bundle $\nu$.

\begin{lemma}
For $f \colon X \to *$, the parametrized spectrum $C_f$ is dualizable
and in fact invertible, with inverse given by
$\Sigma_X^{-V} \bS^{\nu}$.
\end{lemma}

\begin{proof}
This follows from the evident equivalence
\[
\bS_X^{T_{X/B}} \sma \bS_X^{\nu} \to S^V,
\]
induced from the fact that $\tau \oplus \nu$ is the trivial bundle.
\end{proof}

Since dualizability is detected fiberwise, this has as an immediate
corollary invertibility for the case of general $f \colon X \to B$
with manifold fibers.

\begin{corollary}
The parametrized spectrum $C_f$ is dualizable and in fact invertible.
\end{corollary}

\begin{remark}
The central observation of \cite{Hu} is that the inverse of $C_f$
gives a model for a dualizing complex in the sense of
Grothendieck, adapted to the Wirthm\"uller setting.  
\end{remark}

We now exhibit a natural map 
\[
  \theta \colon f_{!}D_{X} C_{f} \to D_{B} X
\]
and then show that it is an equivalence.  

\begin{construction}
The natural evaluation map
\[
C_{f} \Smash_{X} D_{X}C_{f} \to \bS_{X},
\]
composed with the map to the cofiber
\[
   \sinf_{X} (X\times_{B}X) \to C_{f},
\]
produces a natural composite
\[
  \sinf_{X}(X \times_{B}X) \Smash_{X} D_{X} C_{f} \to \bS_{X}.
\]
We have canonical and natural equivalences $\sinf_{X}
(X\times_{B}X) \htp 
f^{*}\sinf_{B}X_{+}$ and 
$f^{*}\bS_{B}\heq \bS_{X}$, and so we may rewrite the map as 
\[
f^{*}\sinf_{B}X_{+} \Smash_{X} D_{X} C_{f} \to f^{*}\bS_{B}.
\]
Passing to adjoints, we have
\[
D_X C_{f}  \to F_X( f^* \sinf_B X_+, f^* \bS_{B}).
\]
Next, using the natural equivalence $F_X( f^* \sinf_B X_+, f^* \bS_{B}) \htp f^* F_B(\sinf_B X_+, \bS_{B})$, we obtain the map 
\[
D_X C_{f}  \to f^* D_B X, 
\]
and finally applying the $(f_!, f^*)$ adjunction yields the desired
map $\theta$.
\end{construction}

\begin{proposition}
The natural map 
\[
\theta \colon f_! D_X (X \times_B X / X \times_B X - \Delta) \to D_B X
\]
is an equivalence.
\end{proposition}

\begin{proof}
Since equivalences of parametrized spectra are detected fiberwise, it
suffices to restrict to the fiber over each point $b \in B$.
Equivalently, we can assume that $B = *$.  In this case, the map
reduces to 
\[
f_! D_X (X \times X / X \times X - \Delta) \to DX.
\]
By the discussion in~\cite[\S 19.6]{MS}, $D_X(X \times X / X \times X
- \Delta) \htp \Sigma_X^{-V} \bS_X^{\nu}$, and since
$f_! \Sigma_X^{-V} \bS_X^{\nu} \htp \Sigma^{-V} X^{\nu}\htp
X^{-T_{X}}$, abstractly Atiyah duality implies the equivalence we
want. 

We need to check that the map in question is homotopic to the standard
map inducing the Atiyah duality equivalence.  First, recall that the
evaluation map 
\[
\epsilon \colon X^{\nu} \sma X_+ \to S^V
\]
which induces the Atiyah duality equivalence $X^{-T_{X}}\to D X$ 
is induced from the
Pontryagin-Thom construction applied to the composite of the diagonal
$X \to X \times X$ and the zero section $X \times X \to \nu \times
X$ (note that the normal bundle of the compsition of these two
embeddings is trivial).   Next, the composite
\[
X \times X \to X \times X / X \times X
- \Delta
\]
is a model for the Pontryagin-Thom collapse map
associated to the embedding of the diagonal.  Therefore, under the
equivalences $D_X(X \times X / X \times X
- \Delta) \htp \Sigma_X^{-V} \bS_X^{\nu}$ (which is not natural)
and $\sinf_{X} (X \times X) \htp f^* \Sigma_+X$ (which is natural), the map 
\[
f_! (\sinf_{X}(X \times X) \Smash_{X} D_{X}
(X\times X/X\times X-\Delta)) \to \bS
\]
is homotopic to the map
\[
f_! (f^* \Sigma_+ X \Smash_{X} \Sigma_X^{-V} \bS_X^{\nu}) \to \bS
\]
which, when expressed as
\[
\Sigma_+ X \sma \Sigma_X^{-V} X^{\nu} \to \bS,
\]
is the usual Atiyah duality map.
\end{proof}

\begin{corollary}
A choice of equivalence $C_{f}\heq \bS_{X}^{T_{f}}$ induces an
equivalence of spectra over $B$ 
\[
f_!\bS_{X}^{-T_{f}}\simeq D_B X.
\]
Pushing this equivalence forward along the map $p\colon B\to \ptspace$
yields an equivalence of spectra  
\[
   p_{!}D_{B} X \heq X^{-T_{f}}.
\]
\end{corollary}

\begin{proof}
The parametrized spectrum $\bS_{X}^{-T_{f}}$ is classified by a map
\[
  X^{\op} \too \Pic_{\bS},
\]
and the Thom spectrum $X^{-T_{f}}$ is just the colimit along $pf\colon
X\to\ptspace$; that is
\[
  X^{-T_{f}} =  (pf)_{!} = p_{!}f_{!}\bS^{-T_{f}}.
\]
\end{proof}

\begin{definition}\label{def-fww}
The \emph{parametrized Pontryagin-Thom transfer} is the composition 
\[
    \PT_{/B} (f)\colon \bS_{B} \xra{\phi_{/B}} D_{B} X \simeq
    f_{!}\bS_{X}^{-T_{f}}.  
\]
\end{definition}

In conclusion, we have the following.

\begin{proposition} \label{t-pr-umkehr-fiberwise}
Applying the push-forward $p_{!}$ to the parametrized Pongrjagin-Thom
transfer 
\[
   \PT_{/B} (f)\colon \bS_{B}\to      f_{!}\bS_{X}^{-T_{f}}
\]
gives a map
\begin{equation}\label{eq:14}
  \splus B \to X^{-T_{f}}.
\end{equation}
\end{proposition}

\begin{definition} \label{def-PT-tr}
The \emph{Pontryagin-Thom transfer map} associated to $f\colon X\to B$ is
the map 
\[
\PT (f)\colon \splus B \to X^{-T_{f}}
\]
given by proposition \ref{t-pr-umkehr-fiberwise}.  If $R$ is a ring
spectrum, then a Thom isomorphism $R^{*} (\splus X)\heq R^{*-d}
(X^{-T_{f}})$ and the map induced by \eqref{eq:14} determine
an \emph{Umkehr} map
\[
    R^{*} (\splus X) \heq R^{*-d} (X^{-T_{f}}) \to R^{*-d} (\splus B).
\]
\end{definition}

\begin{remark} \label{rem:embedding}
We also expect to have Umkehr maps arising from embeddings of
manifolds.  Let $j \colon W\to M$ be an embedding of manifolds with
normal bundle $\nu$, and let $p\colon M\to \ptspace$ be the map to a
point.  To obtain a similar view of the Umkehr map $j$, we need to
realize the geometric Pontryagin-Thom map
\[
\Sigma^\i_+ M \to\Sigma^\i W^{\nu}
\]
as $p_{!}t,$ where $t$ is a map of spectra over $M$.  Now if $S^{\nu}$
is the parametrized space associated to $\nu$, then
\[
       W^{\nu} \heq p_{!}j_{!} S^{\nu},
\]
and this suggests that the map we seek is a suspension of a map of the
form
\[
     \alpha\colon S^0_{M} \to j_{!} S^{\nu}.
\]
The required map is constructed by May and Sigurdsson \cite[18.6.3,
18.6.5]{MS}.
\end{remark}

\subsection{Twists}
\label{sec:twists}

The Pontryagin-Thom transfer map $\PT (f)$ of Proposition
\ref{t-pr-umkehr-fiberwise} arises 
from a map of specra parametrized by $B$, and so we can twist it.  
We begin by recalling the  
notion of a twist (see also \cite{ABG}).

\begin{definition}
Let $R$ be an $\bE_{n+1}$-ring spectrum.  A {\em twist} is a map
\[
        \alpha \colon B \to \Pic_R \to \Mod_R
\]
classifying a bundle of invertible $R$-modules over $B$.  Notice that
the $\i$-category $\spaces_{/\Pic_R}$ of twists is $\bE_n$-monoidal.
\end{definition}

Given a twist $\alpha \colon B \to \Pic_R \to \Mod_R$ and a
generalized Umkehr map 
\[
R_B\longrightarrow X
\]
we can twist the map by
fiberwise smash to obtain a map
\[
R_B \sma_{R_B} \alpha \to X \sma_{R_B} \alpha. 
\]

We can interpret this using our definition of twisted cohomology.  Let
$p \colon B \to *$ denote the terminal map.  As explained
in \cite{ABGHR,ABG}, $M\alpha = p_{!} (R_{B}\Smash_{R_B} \alpha)$ is
the generalized Thom spectrum of the map $\alpha$; by definition, its
$R$-cohomology is the $\alpha$-twisted $R$-cohomology of $B$.
\[
     R^{k-\alpha} (B) = \pi_{0} \map_R (M\alpha,\Sigma^{k} R). 
\]

\begin{remark}
When $R$ is the K-theory spectrum (real or complex), the question
arises of comparing our version of twisted $K$-theory to the
Atiyah-Segal construction of twisted $K$-theory in terms of Fredholm
operators.  In \cite[\S 5]{ABG}, we interpret the Atiyah-Segal
construction as associating to a twist $f \colon X \to B\GL{A}$ the
spectrum 
\[
    \Gamma_X (f) \htp \ispec_{/ X}(\bS_{X},f),
\]
i.e., the spectrum of sections of $f$.  Further, we explain how this
spectrum is equivalent to the Thom spectrum functor applied to the
twist $-f$ (the image under the involution $-1 \colon B\GL{A} \to
B\GL{A}$).

One might worry however that the geometric aspects of the Atiyah-Segal
construction associate to a twist $X \to K(\bZ/2,2)$ a composite other
than the that induced by the inclusion $K(\bZ/2,2) \to B\GL{KO}$, and
so there could be a potential discrepancy.  But in \cite{AGG}, the
third author (along with Antieau and Gomez) prove that up to homotopy
any map $j \colon K(\bZ/2,2) \to B\GL{KO}$ is either trivial or the
canonical inclusion (and similarly for $KU$).
\end{remark}

\begin{remark}
There are many other interesting examples of twisted cohomology theories.
\begin{enumerate}

\item
The spectrum $tmf$ of topological modular forms comes equipped with a
canonical map $K(\bZ,4)\to\Pic_{tmf}$; see \cite{ABG} for the details
of construction of this map.  Note that in \cite{ABG} we used the
connected component $BGL_1(tmf)$ of the identity in the grouplike
$\bE_\infty$-space $\Pic_{tmf}$, instead of the whole Picard space,
which is of course sufficient since $K(\bZ,4)$ is connected.

\item
Another ``form of elliptic cohomology'' is the algebraic K-theory $K(ku)$ of the connective topological K-theory spectrum $ku$.
It is also equipped with a map $K(\bZ,4)\to BGL_1(K(ku))\simeq\Pic_{K(ku)}$ constructed as follows: delooping, it suffices to produce an $\mathbb{A}_\infty$-map $K(\bZ,3)\to GL_1(K(ku))$.
Using the composition $BGL_1(ku)\to BGL(ku)\to\bZ\times
BGL(ku)^+\simeq\Omega^\infty K(ku)$, which is a map of
$\bE_\infty$-spaces for the multiplicative structure on $\Omega^\infty
K(ku)$, we obtain this map by delooping the $\bE_\infty$-map
$K(\bZ,2)\to GL_1(ku)$. 

\item
The family of $\mathbb{E}_\infty$-ring spectra (defined for each prime
$p$ and positive integer $n$) studied by C. Westerland admit twists by
$K(\bZ_p,n)$.  These spectra $R_n$ are defined as the homotopy fixed
points $E_n^{h S\mathbb{G}_n^\pm}$ of the Lubin-Tate spectra $E_n$,
and admit Snaith-style presentations of the form $R_n\simeq
L_{K(n)}\Sigma^\infty_+ K(\bZ_p,n+1)[\rho^{-1}]$.  There therefore
come equipped with canonical $\bE_\infty$-maps
$K(\bZ_p,n+2)\to\Pic_{R_n}$.
\end{enumerate}
\end{remark}

The monoidal structure on the category of twists gives rise to a
product in twisted cohomology:

\begin{theorem}
Let $R$ be an $\bE_{n+1}$ ring spectrum.  For any space $X$, the
$\bE_n$ monoidal structure on $\spaces_{/\Pic_R}$ gives rise to a product map
\[
R^\alpha(X) \otimes R^\beta(X) \to R^{\alpha+\beta}(X).
\]
\end{theorem}

We now specialize to the geometric example considered in
section~\ref{sec:geom}.  
We continue to let $f\colon X\to B$ denote a family of compact manifolds
over a space $B$, and to  write
$p$ for the map $B \to \ptspace$.  Given a twist $\alpha$, we can 
form the map of $R$-modules over $B$  
\[
   \PT_{/B}(f) \Smash_{R_B} \id \colon R_{B} \Smash_{R_B} \alpha \to f_{!}\bS_{X}^{-T_{f}} \Smash_{R_B} \alpha.
\]
Applying the pushforward $p_{!} \colon (\Mod_R)_{/B} \to \Mod_R$
gives rise to the twisted Pontryagin-Thom transfer.

\begin{definition}
The twisted Pontryagin-Thom transfer map is defined as:
\[
\PT (f,\alpha) = p_{!} \Bigl(\PT_{/B}(f) \Smash_{R_B} \id \colon R_{B} \Smash_{R_B} \alpha \to f_{!}\bS_{X}^{-T_{f}} \Smash_{R_B} \alpha \Bigr).
\]
\end{definition}

As for $f_{!}\bS_{X}^{-T_{f}} \Smash_{R_B} \alpha$, the projection
formula yields
\begin{align*}
f_{!}\bS_{X}^{-T_{f}} \Smash_{R_B} \alpha & \heq 
f_{!} (R_{X} \sma_{\bS_X} \bS_X^{-T_{f}}) \Smash_{R_B} \alpha \\
 & \heq f_{!} ((R \sma_{\bS_X} \bS_X^{-T_{f}}) \Smash_{R_X} f^{*}\alpha),
\end{align*}
and so 
\[
  p_{!} (f_{!}\bS_{X}^{-T_{f}} \Smash_{R_B} \alpha)  \heq p_{!}f_{!}
  ((R_{X} \sma_{\bS_X} \bS_X^{-T_{f}})\Smash_{R_X} f^{*}\alpha) =
  X^{-T_{f}+ \alpha f}
\]
is the $R$-module Thom spectrum whose $R$-module cohomology is the
cohomology of $X$, twisted by the sum of 
\[
   X \xra{-T_{f}} \Pic_{\bS} \xra{} \Pic_{R} \to \Mod_R
\]
and 
\[
  X  \xra{f} B \xra{\alpha} \Pic_{R} \to \Mod_R.
\]
That is, we have the following.  
\begin{proposition}\label{prop:twisty}
Let $f\colon X\to B$
be a family of compact manifolds, and let $\alpha \colon
B\to \Pic_R \to \Mod_R$ be a parametrized invertible $R$-module over
$B$.  Then we have an equivalence of $R$-modules
\[
  X^{-T_{f} + \alpha f} \heq p_{!} (f_{!}\bS_{X}^{-T_{f}} \Smash_{R_B}
	\alpha),
\]
and so the twisted Pontryagin-Thom transfer $\PT(X,\alpha)$ may be viewed 
as a map of $R$-modules 
\[
    \PT (X,\alpha) \colon  B^\alpha \xra{} X^{-T_{f} + \alpha f}.
\]
\end{proposition}

Passing to $R$-cohomology gives a twisted Umkehr map
\[
      R^{*+Tf-\alpha} (X) \xra{} R^{*-\alpha} (B).
\]
We close with an example, motivated by \cite{MR2438341,fw:asd}.  Suppose that $\alpha$ makes the diagram 
\[
\xymatrix{
X \ar[d]_{f} \ar[r]^-{-Tf} & \Pic_{\bS} \ar[d] \\
B \ar[r]^-{\alpha} & \Pic_{R} \\
}
\]
commute in the homotopy category.  A choice of homotopy between the
two compositions determines an equivalence of
$R$-modules 
\[
   X^{-Tf+\alpha f} \heq \splus X\Smash R,
\]
so the twisted Pontryagin-Thom transfer map takes the form 
\[
    \PT(X,\alpha) \colon B^{\alpha} \xra{} \splus X \Smash R,
\]
and passing to $R$-cohomology yields a twisted Umkehr map 
\[
     R^{*} (X) \xra{} R^{*-\alpha} (B).
\]

The following instance of this construction was described in our paper
\cite{ABG}, and was inspired by the work of Freed and Witten and
Carey and Wang \cite{fw:asd,arixv:math/0507414,MR2438341}. Let $j\colon D \to  X$ be an embedded submanifold, let $\nu$ be the normal
bundle of $j$, and suppose that $D$ carries a
complex vector bundle $\xi.$

If $\nu$ carries a $\Spinc$-structure, then we can
form the $K$-theory push-forward
\[
j_{!}\colon K (D) \to K (X).
\]
In that situation Minasian and Moore and Witten
\cite{MinasianMoore,Witten} discovered that it is  sensible to think of the $K$-theory class
\[
j_{!} (\xi) \in K (X)
\]
as the ``charge'' of the $D$-brane $D$ with Chan-Paton bundle
$\xi.$

Let $b\colon K (\Z/2,2)\to K (\Z,3)$ be the indicated Bockstein: then
$BSpin$ is the fiber in the sequence 
\[
\xymatrix{
{BSpin} 
 \ar[r]
& 
{BSO}
\ar[rr]^-{bw_{2}} 
\ar[dr]_{w_{2}}
& &
{K (\Z,3)} \,\, .\\
&& {K (\Z/2,2)} 
 \ar[ur]_{b}
}
\]
Suppose that $\nu$ does not carry a $\Spinc$-structure, but
we have a map $H\colon X \to K (\Z,3)$ making the diagram
\[
\xymatrix{
    D \ar[d]_-{j} \ar[r]^-\nu & BSO \ar[d]^-{bw_{2}} \\
X \ar[r]_-{H} & K(\Z,3) \\
}
\]
commute up to homotopy.  A homotopy
$bw_{2}\heq Hj$
determines an isomorphism
\[
     K^{*} (D) \iso  K^{*+H} (D^{\nu})
\]
(since $\nu=-Tj$).  Using the construction of May and Sigurdsson described
in Remark \ref{rem:embedding} together with the discusson of twisted
Umkehr maps above, we have a twisted umkehr map
\begin{equation}
   j_{!}\colon K^{*} (D) \xra{} K^{*+H} (X).
\end{equation}
The class $j_{!} (\xi) \in K^{*+H} (X)$ is evidently an analogue of
the charge in this situation.  The discovery of the condition that
there exists a class $H$ on $X$ such that $H|_{D} = W_{3} (\nu)$ is
due to Freed and Witten \cite{fw:asd}.

\section{The general theory of parametrized objects}\label{sec:param}

\subsection{$\i$-Topoi}

The general theory of parametrized objects works not only over the $\i$-category of spaces, as used in the previous section, but equally well over an arbitrary $\i$-topos.
Recall that an $\i$-topos $\X$ is a presentable $\i$-category which arises as an accessible left-exact localization of a presheaf $\i$-category.
The terminal $\i$-topos is the $\i$-category of spaces.

The key feature of an $\i$-topos $\X$ is that $\X$ satisfies descent.
A very succinct way of expressing this fact is as follows: adopting the notation of \cite{HTT}, we write $\O_X=\Fun(\Delta^1,\X)$ for the $\i$-category of arrows of $\X$ and $p:\O_\X\to\X$ for the Cartesian fibration which assigns to an object $f:S\to T$ in $\O_\X$ its target $T$ in $\X$.
Clearly the fiber of this Cartesian fibration over the object $T$ is precisely $\X_{/T}$, the slice over $T$, which is itself an $\i$-topos.
Straightening this Cartesian fibration, we obtain a functor
\[
\X^{\op}\too\widehat{\Cat}_\infty
\]
which is a {\em sheaf} in the sense that it preserves limits: that is, if $T\simeq\colim T_\alpha$ is a colimit diagram in $\X$, then the induced map
\[
\X_{/T}\too\lim_\alpha\X_{/T_\alpha}
\]
is an equivalence in $\widehat{\Cat}_\infty$.
In fact, this descent condition characterizes $\i$-topoi amongst locally Cartesian closed presentable $\i$-categories.

Let $\C$ be an arbitrary presentable $\i$-category, which we will view as the $\i$-category in which our sheaves on $\X$ take values.
Following \cite{HTT}, we define the $\i$-category of $\C$-valued sheaves on $\X$ as
\[
\Shv_\C(\X)=\Fun^{\lim}(\X^{\op},\C)
\]
the $\i$-category of limit preserving functors from $\X^{\op}$ to $\C$.
In light of the discussion above, the target fibration $p:\O_\X\to\X$ can be viewed (via the straightening functor) as a $\widehat{\Cat}_\i$-valued sheaf on $\X$.
In the special case in which $\C=\spaces$ is the $\i$-category of spaces, we simply write $\Shv(\X)$ in place of $\Shv_{\spaces}(\X)$, and the Yoneda embedding induces an equivalence
\[
\X\simeq\Fun^{\lim}(\X^{\op},\spaces)\simeq\Shv(\X)
\]
from $\X$ to sheaves of spaces on $\X$.

We are now in a position to define objects of an $\i$-category $\C$ parametrized over objects of an $\i$-topos $\X$.

\begin{definition}
Let $\X$ be an $\i$-topos and let $\C$ be a presentable $\i$-category.
Then a family of objects of $\C$ parametrized by an object $S$ of $\X$ is a $\C$-valued sheaf on $\X_{/S}$.
The $\i$-category of objects of $\C$ parametrized by an object $S$ of $\X$ is the $\i$-category $\Shv_\C(\X_{/S})$ of $\C$-valued sheaves on $\X_{/S}$.
\end{definition}

\subsection{Parametrized objects in the $\i$-category of spaces}

To justify this notion, it is instructive to first consider the basic
case of the terminal $\i$-topos $\X=\spaces$, the $\i$-category of
spaces.

Given an $\i$-category $\aC$ and a Kan complex $S$ (which we think of
as corresponding to a space via the singular complex), we wish to
define an $\i$-category $\aC_{/S}$ of objects of $\aC$ parametrized
over $S$.
Of course, we should not think of $S$ as being fixed; rather, we
require restriction (a.k.a. pullback or base-change) functors
\[
f^*\colon \aC_{/T}\longrightarrow\aC_{/S}
\]
for each map of Kan complexes $f\colon S\to T$.

Moreover, these must be compatible with composition.
Given a $2$-simplex which exhibits $g\circ f\colon S\to U$ as a
composite of $f \colon S\to T$ followed by $g\colon T\to U$, we require a natural
$2$-simplex exhibiting $(g\circ f)^* \colon \aC_{/U}\to\aC_{/S}$ as a
composite of $g^*:\aC_{/U}\to\aC_{/T}$ followed by
$f^* \colon \aC_{/T}\to\aC_{/S}$, and so on for all higher dimensional
simplices. 
In other words, we require a functor
\[
\spaces^{\op}\longrightarrow\widehat{\Cat}_\i
\]
from the $\i$-category $\spaces^{\op}$ of spaces to the $\i$-category $\widehat{\Cat}_\i$ of (not necessarily small) $\i$-categories.

Lastly, this functor must satisfy descent, which is to say that the parametrization construction $\C_{/(-)}$ must be local in the base.
For instance, given maps $f \colon S\to T$ and $g \colon S\to U$, the
restriction functors $f^* \colon \aC_{/T}\to\aC_{/S}$ and
$g^* \colon \aC_{/U}\to\aC_{/S}$, fit into a square
\[
\xymatrix{
\aC_{/T\underset{S}{\coprod} U}\ar[r]\ar[d] & \aC_{/U}\ar[d]\\
\aC_{/T}\ar[r] &\aC_{/S}}
\]
which should be cartesian.  Specifically, there is a natural map from the
$\i$-category of objects parametrized over the pushout $T\coprod_S U$
to the pullback of the $\i$-categories of parametrized objects, and we
require that this map is an equivalence.
This can be regarded as a higher categorical analogue of the Mayer-Vietoris axiom for cohomology theories, which states that the presheaf of spectra $R^{(-)}:\spaces^{\op}\to\Sp$ associated to a spectrum $R$, via the function spectrum, sends (homotopy) pushouts squares in $\spaces$ to (homotopy) pullbacks in $\Sp$.
Similarly, the coproduct axiom dictates that $R^{(-)}:\spaces^{\op}\to\Sp$ sends (possibly infinite) coproducts in $\spaces$ to products in $\Sp$, and this is also a descent condition:
Given a family of spaces $S_\lambda$ indexed by some (possibly infinite) set $\Lambda$ with coproduct $S=\coprod_\lambda S_\lambda$, the inclusions $i_\lambda:S_\lambda\to S$ induce a map
\[
\aC_{/S}\longrightarrow\prod_\lambda\aC_{/S_\lambda}
\]
which we also require is an equivalence.
Together, these two conditions amount to saying that the $\i$-category $\aC_{/S}$ of objects of $\aC$ parametrized over $S$ is {\em local} over the base $S$.
Finally, we have the obvious normalization condition: if $S=*$ is the terminal space, then we must have an equivalence $\aC_{/*}\simeq\aC$.

The following proposition is a concise reformulation of the discussion above.

\begin{proposition}\label{prop:sheaf}
If $\X=\spaces$ is the terminal $\i$-topos, then evaluation at the point
\[
\Pre_{\widehat{\Cat}_\i}(\spaces)\too\widehat{\Cat}_\i
\]
induces an equivalence between $\widehat{\Cat}_\i$ and the full subcategory
\[
\Shv_{\widehat{\Cat}_\i}(\spaces)\subset\Pre_{\widehat{\Cat}_\i}(\spaces)
\]
of the (very large) $\i$-category $\Pre_{\widehat{\Cat}_\i}(
\spaces)$ of $\widehat{\Cat}_\i$-valued presheaves on $\spaces$ spanned by the sheaves (that is, the limit-preserving functors).
\end{proposition}

\begin{proof}
The $\i$-category $\spaces^{\op}$ is freely generated under limits by the
initial object $*$ and the $\i$-category $\widehat{\Cat}_\i$ admits
all small limits. 
\end{proof}

This leads naturally to the following definition, a direct generalization of the notion of parametrized space (respectively, pointed space, spectrum) given in \cite{ABGHR1}.

\begin{definition}
Let $\aC$ be a (possibly large) $\i$-category and let $S$ be a Kan complex.
The $\i$-category $\aC_{/S}$ of objects of $\aC$ parametrized over $S$ is the $\i$-category $\Fun(S^{\op},\aC)$ of $\aC$-valued presheaves on $S$.
\end{definition}

\begin{remark}
If $S$ is connected, then $S\simeq BG$ where $G\simeq\Omega S$ is the loop space of $S$ (at a chosen basepoint).
In this case, a functor $BG^{\op}\to\aC$ is an object of $\aC$ equipped with a right action of the $\i$-group (grouplike monoidal $\i$-groupoid) $G$.
Note that if $\aC=\spaces$ is the $\i$-category of spaces, then our notion of a space parametrized over $BG$ is a functor $BG^{\op}\to\spaces$, or a (naive) $G$-space.
Of course, since $BG$ is a space, we also have the slice $\i$-category $\spaces_{/BG}$ of spaces over $BG$.
These are canonically equivalent by the straightening construction of \cite[2.2.1.2]{HTT}.
\end{remark}

\begin{proposition}\label{prop:1ff}
Let $\aC$ be a (possibly large) $\i$-category.
Then there exists a unique sheaf of (possibly large) $\i$-categories
\[
\aC_{/(-)} \colon \spaces^{\op}\longrightarrow\widehat{\Cat}_\i
\]
on $\spaces$ whose value at $S\in\spaces$ is equivalent to the $\i$-category $\aC_{/S}$ of $\aC$-valued presheaves on $S$.
\end{proposition}

\begin{proof}
By proposition \ref{prop:sheaf}, to specify a limit-preserving functor
$F:\spaces^{\op}\to\Cat_\i$, it is enough to specify the image of the
initial object $*$, which we take to be $\aC$.  For a given space $S$,
we have that $\Fun(S^{\op},\aC)\simeq\lim_S\aC$, which shows that the
value of $F$ on $S$ is equivalent to $\aC_{/S}$.
\end{proof}

\subsection{Parametrized objects over presheaf $\i$-topoi}
The $\i$-topos $\Pre(\aT)$ of presheaves of spaces on a small $\i$-category $\aT$ has the effect of formally adding all small colimits to $\aT$.
Therefore, if $T$ is an object of $\Pre(\aT)$, then writing $\aT_{/T}$ for the pullback
\[
\xymatrix{
\aT_{/T}\ar[r]\ar[d] & \aT\ar[d]\\
\Pre(\aT)_{/T}\ar[r] & \Pre(\aT)},
\]
the resulting fully faithful functor $\aT_{/T}\to\Pre(\aT)_{/T}$ induces a colimit preserving functor
\[
\Pre(\aT_{/T})\too\Pre(\aT)_{/T}.
\]
By construction, this functor is fully faithful, since it is the pullback of the fully faithful Yoneda embedding $\aT\to\Pre(\aT)$ along the projection $\Pre(\aT)_{/T}\to\Pre(\aT)$.
It is also essentially surjective: given a map $S\to T$ in $\Pre(\aT)$, writing $S\simeq\colim U_\alpha$ as a colimit of representable presheaves $U_\alpha$, we obtain a diagram in $\aT_{/T}$ whose colimit in $\Pre(\aT_{/T})$ is sent to its colimit $S$ in $\Pre(\aT)_{/T}$.
Hence it is an equivalence of $\i$-categories.

The equivalence $\Pre(\aT)_{/T}\simeq\Pre(\aT_{/T})$ can be formulated more conceptually.
Recall that an object $U$ of an $\i$-category $\C$ is said to be {\em
completely compact} if the associated corepresentable functor
$\map_\C(U,-)\colon \C\to\spectra$ commutes with small colimits~\cite[5.1.6.5]{HTT}.

\begin{proposition}
Let $\X$ be an $\i$-topos and let $\aT\subset\X$ denote the full subcategory of completely compact objects of $\X$.
Then $\X$ is a presheaf $\i$-topos if and only if the colimit preserving functor $\Pre(\aT)\to X$ induced by the inclusion $\aT\subset\X$ is an equivalence.
In particular, any presheaf $\i$-topos is freely generated under colimits by its full subcategory of completely compact objects.
\end{proposition}

\begin{proof}
Clearly $\X$ is a presheaf $\i$-topos if $\Pre(\aT)\to\X$ is an equivalence.
Conversely, if $\X$ is a presheaf $\i$-topos, then $\X\simeq\Pre(\aT')$ for some small $\i$-category $\aT'\subseteq\X$.
Since the objects of $\aT'$ are completely compact, we see that $\aT'\subset\aT$, so $\Pre(\aT')\subset\Pre(\aT)$ is fully faithful.
It therefore suffices to show that $f\colon \Pre(\aT)\to\X$ is fully faithful.
To this end, choose $Y\in\Pre(\aT)$, and write $Y\simeq\colim U_\alpha$ for $U_\alpha\in\aT$.
Then, for any $U\in\aT$, $\map_{\Pre(\aT)}(U,Y)\simeq\map_{\Pre(\aT)}(U,\colim_\alpha U_\alpha)\simeq\colim_\alpha\map_\aT(U,U_\alpha)\simeq\colim_\X\map(U,U_\alpha)\simeq\colim_\X\map(U,f(Y))$ since $f$ preserves colimits and $U$ is complete compact.
\end{proof}

If $\X=\spaces_{/T}$ is the slice $\i$-topos of spaces over $T$, then
an object $S\to T$ of $\X$ is completely compact if and only if $S$ is
contractible~\cite[5.1.6.9]{HTT}.  The following corollary is an
immediate consequence.

\begin{corollary}
Let $\X$ be a presheaf $\i$-topos.
Then, for any (possibly large) $\i$-category $\C$, there is a canonical equivalence
\[
\Shv_\C(\X)\simeq\Pre_\C(\aT).
\]
In particular, if $\X=\spaces_{/T}$, then $\Shv_\C(\X)\simeq\Pre_\C(T)$.
\end{corollary}

\subsection{The base-change functors: $f^*$ and its adjoints $f_!$ and $f_*$}
\label{sec:one-two-three}

In practice, it is useful to require more structure on $\aC$ than that of an arbitrary $\i$-category.
The first and most useful assumption is that $\aC$ is
presentable, which is to say that $\aC$ has all small colimits and $\aC\simeq\Ind_\kappa(\aC^\kappa)$ for some infinite regular cardinal $\kappa$ (here $\aC^\kappa$ denotes the full subcategory of $\kappa$-compact objects in $\aC$).  Since maps between presentable $\i$-categories are typically taken to be colimit preserving, they admit right adjoints by definition.  We want our theory of parametrized objects to reflect this; that is, when $\aC$ is presentable, each of the base-change functors $f^*\colon \aC_{/T}\to\aC_{/S}$ should admit a right adjoint $f_*\colon \aC_{/S}\to\aC_{/T}$.

\begin{proposition}
Let $\X$ be an $\i$-topos and let $\C$ be a presentable $\i$-category.
Then there is a canonical equivalence of presentable $\i$-categories
\[
\C\otimes\X\simeq\Shv_\C(\X).
\]
\end{proposition}

\begin{proof}
This is a special case of \cite[Proposition 6.3.1.16]{HA}, which is more generally true whenever $\X$ is presentable, though we will not need this extra generality.
\end{proof}

\begin{proposition}\label{prop:2ff}
Let $\X$ be an $\i$-topos and let $\aC$ be a presentable $\i$-category.  Then there exists a unique sheaf of presentable $\i$-categories and right-adjoint functors  
\[
\aC_{/(-)} \colon \X^{\op}\longrightarrow\PrL
\]
on $\X$ whose value at the object $S\in\X$ is equivalent to the presentable $\i$-category
\[
\aC_{/S}\simeq\aC\otimes\X_{/S}\simeq\Shv_\C(\X)
\]
of $\aC$-valued sheaves on $\X_{/S}$. 
\end{proposition}

\begin{proof}
Since the restriction functors $f^*:\X_{/S}\to\X_{/T}$ are left adjoint functors of presentable $\i$-categories, it is clear that, tensoring with $\aC$, we obtain a functor (defined up to contractible ambiguity) $\X^{\op}\to\PrL$.
Thus it only remains to see that this functor preserves limits.
Equivalently, since $\aC\otimes(-):\PrL\to\PrL$ commutes with colimits, we can check instead that the left adjoints $f_!$ of the restrictions $f^*$ induce colimit decompositions
\[
\colim_\alpha \X_{/U_\alpha}\to\X_{/T}
\]
in $\PrL$ for any colimit diagram $\colim_\alpha U_\alpha\simeq T$ in $\X$.
Since the forgetful functor $\PrL\to\widehat{\Cat}_\i$ preserves limits, this follows immediately from descent, i.e. that the restrictions induce an equivalence $\X_{/T}\simeq\lim_\alpha\X_{/U_\alpha}$ in $\widehat{\Cat}_\i$.
\end{proof}

The restriction functors $f^*:\X_{/T}\to\X_{/S}$ admit both left and right adjoints $f_!$ and $f^*$, respectively.
Let $\PrLR$ denote the subcategory of $\PrL$ whose objects are again presentable $\i$-categories, but whose morphisms consist of those functors $\aC\to\aD$ which are simultaneously both left and right adjoints (equivalently, by the adjoint functor theorem, those functors $\aC\to\aD$ which preserve all limits and colimits).

\begin{lemma}\label{lem:prlr}
Each of the arrows in the cartesian square
\[
\xymatrix{
\PrLR\ar[r]\ar[d] & \PrL\ar[d]\\
\PrR\ar[r] & \widehat{\Cat}_\i}
\]
preserve and detect small limits.

\end{lemma}

\begin{proof}
Both arrows to $\widehat{\Cat}_\i$ preserve limits by \cite[Theorems 5.5.3.13 and 5.5.3.18]{HTT}, and their proofs reveal that they also detect limits.
\end{proof}

Lemma \ref{lem:prlr} and the proof of proposition \ref{prop:1ff} now
imply the following theorem.

\begin{theorem}\label{thm:exist1}
Let $\X$ be an $\i$-topos and $\C$ a presentable $\i$-category.
Then the sheaf $\aC_{/(-)}$ of presentable $\i$-categories on $\X$ factors through the subcategory $\PrLR\subset\PrL$.
In particular, there exists a unique sheaf of presentable $\i$-categories and left and right adjoint functors
\[
\C_{/(-)}:\X^{\op}\longrightarrow\PrLR
\]
on $\X$ whose value at the object $S\in\X$ is equivalent to the $\i$-category $\Shv_\C(\X_{/S})$ of $\aC$-valued sheaves on $\X_{/S}$.
\end{theorem}

Thus far, we have constructed, for each $\i$-topos $\X$ and each presentable $\i$-category
$\aC$, a ``three functor formalism'' for the theory
of objects of $\aC$ parametrized over objects of $\X$.  There are a number of ``Beck-Chevalley'' type
relations which occur when given a pullback square in $\X$;
see \cite[Propositions 2.2.11, 11.4.8]{MS} for a treatment in the
context of (pointed) spaces and spectra parametrized over spaces.


\begin{proposition}[Beck-Chevalley conditions]
Suppose given a cartesian square
\[
\xymatrix{
S\ar[r]^f\ar[d]^g & T\ar[d]^h\\
U\ar[r]_i & V}
\]
in an $\i$-topos $\X$.
Then there are canonical natural equivalences $g_! f^*\simeq i^*h_!$ and $i^*
h_*\simeq g_* f^*$ of functors $\X_{/T}\to\X_{/U}$ that are
interchanged by adjunction.
\end{proposition}

\begin{proof}
Using the commutativity of the square and (co)unit transformations, it is easy to construct natural transformations $g_! f^*\to i^* h_!$ and $i^* h_*\to g_* f^*$.
Moreover, by adjunction and symmetry, the second transformation is an equivalence if and only if the first is an equivalence.
Thus it only remains to show that the transformation $g_! f^*\to i^* h_!$ induces an equivalences upon evaluation at an object $X$ of $\X_{/T}$.
But the projection $\X_{/T}\to\X$ is conservative, meaning it is enough to check this in $\X$ itself, where it follows from the equivalences
$X\times_T S\simeq X\times_T T\times_V U\simeq X\times_V U$.
\end{proof}

\subsection{The proper pushforward and its right adjoint}

We now suppose that $\aC$ is a stable presentable $\infty$-category.
In this case, it turns out that for
proper geometric morphisms $p_* \colon \X\to\Y$ of $\i$-topoi, the induced functor
$p_* \colon \Shv_\C(\X)\to\Shv_\C(\Y)$ preserves {\em all} colimits, so that $p_*$ itself admits a right adjoint $p^! \colon \Shv_\C(\Y)\to\Shv_\C(\X)$. 

\begin{proposition}
Let $\aC$ be a compactly generated stable $\i$-category and $p_* \colon \X\to\Y$ a proper geometric morphism of $\i$-topoi.  Then the induced functor
$p_* \colon \Shv_\C(\X)\to\Shv_\C(\Y)$ admits a right adjoint
$p^! \colon \Shv_\C(\Y)\to\Shv_\C(\X)$.
\end{proposition}

\begin{proof}
By \cite[Remark 7.3.1.5]{HTT}, $p_*\colon \X\to\Y$ preserves filtered colimits, so that $p_* \colon \spectra\otimes\X\to\spectra\otimes\Y$ is a map of $\spectra$-modules in $\PrL$ and in particular preserves all colimits.
Now, since $\C$ is stable and presentable, $\C$ is also a $\spectra$-module in $\PrL$, so that 
\[
p_*:\C\otimes\X\simeq\C\otimes_{\spectra}{\spectra}\otimes\X\to\C\otimes_{\spectra}{\spectra}\otimes\Y\simeq\C\otimes\Y
\]
is again a map of $\spectra$-modules in $\PrL$ and in particular preserves all colimits.
It follows that $p_*:\Shv_\C(\X)\simeq\C\otimes\X\to\C\otimes\Y\simeq\Shv_\C(\Y)$ admits a right adjoint $p^!:\Shv_\C(\Y)\to\Shv_\C(\X)$.
\end{proof}

\begin{definition}
Let $\X$ be an $\i$-topos.  Then a map $p\colon S\to T$ in $\X$ is
proper if $p_* \colon \X_{/S}\to\X_{/T}$ is a proper morphism of $\i$-topoi.
\end{definition}

For example, in the $\i$-topos of spaces, a map is proper if the
fibers are compact.  Given a proper map $p\colon S\to T$ in $\X$, it
follows that the pushforward $p_*$ admits a right adjoint $p^!$.  This
gives a series of adjunctions $p_!,p^*,p_*,p^!$, each of which is
right adjoint to the functor on its left.

\subsection{The tangent bundle of an $\i$-topos}

In this subsection, we provide an interpretation of parametrized
spaces and spectra over an arbitrary $\i$-topos in terms of the notion
of tangent bundles of $\i$-categories.

Let $\X$ be an $\i$-topos, and let
\[
\O_\X\simeq\Fun(\Delta^1,\X)
\]
denote the source of the presentable fibration $p \colon \O_\X\to\X$ over
$\X$ associated to ``target'' map $d_0 \colon \Delta^1\to\Delta^0$. 
Note that $p$ is a presentable cartesian fibration since $\X$ admits
pullbacks and each of the fibers $\X_{/X}$ over $X\in\X$ is
presentable (even an $\i$-topos), and that the fiber of the
projection $p\colon \O_{\X}\to\X$ is precisely the $\i$-category $\aS_{/X}$
of spaces over $X$.

The objects are $\aS_{/X}$ not literally spaces fibered over $X$, for
the simple reason that $X$ itself need not be a space, but we may
nevertheless reasonably regard them as spaces over $X$ for the
following reason.  If $\X=\aS$, then a space over $X\in\X$ is precisely
an $X$-indexed family (a functor $X^{\op}\to\X$) of objects of $\X$, and
this literally still holds if $X$ is a ``space'' in $\X$; furthermore,
even if $X$ is an arbitrary object of $\X$, not necessarily a space,
we may nevertheless identify $\X_{/X}$ with the $\i$-category of
bundles of spaces over $\X_{/X}$. 

\begin{definition}
A bundle of spaces over an $\i$-category $\X$ with limits is a right
fibration $\aF\to\X$ such that (the straightening $F\colon \X^{\op}\to\aS$
of) $\aF$ preserves limits. 
\end{definition}

\begin{lemma}
Let $\X$ be an $\i$-topos and let $t_*:\X\to\aS$ denote the unique
geometric morphism from $\X$ to the $\i$-category $\aS$ of spaces. 
\begin{enumerate}
\item
If $X\in\X$ is a space, in the sense that $X\simeq t^*S$ for some
$S\in\aS$, then $\X_{/X}\simeq\Fun(S^{\op},\X)$. 
\item
For any object $X\in\X$, $\X_{/X}\simeq\Fun^{\lim}(\X_{/X}^{\op},\aS)$
is the $\i$-topos of bundles of spaces over $\X_{/X}$. 
\end{enumerate}
\end{lemma}

The target fibration $p:\O_\X\to\X$ is an unstable version of the
tangent bundle $q:\mathrm{T}_\X\to\X$ of $\X$.  Since $p$ is a presentable
fibration, we may regard it as the unstraightening of the (limit
preserving) functor $\aS_{/\X}:\X^{\op}\to\PrR$.  Stabilizing, we
arrive at a functor $\Sp_{\X}:\X^{\op}\to\PrR_{\mathrm{st}}$, which
unstraightens to the tangent bundle $q:\mathrm{T}_\X\to\X$.  We record this in
the following proposition.

\begin{proposition}
Let $\X$ be an $\i$-topos.  Then the fiber of the projection
\[
p:\O_\X\too\X
\]
over $X\in\X$ is the $\i$-category $\aS_{/X}$ of spaces over $X$, and
the fiber of the projection 
\[
q:\mathrm{T}_\X\too\X
\]
over $X\in\X$ is the $\i$-category $\Sp_{/X}$ of spectra over $X$.
\end{proposition}

\section{Algebraic structures on parametrized objects}

In practice, it in not enough to consider objects of presentable
$\i$-category $\C$ parametrized over objects of an $\i$-topos $\X$.
Often we are interested in whether or not these parametrized objects
admit some sort of algebraic structure.  The algebraic structures in
question are encoded by the action of an $\i$-operad
$\O^\otimes$.

\subsection{The closed monoidal structure}
\label{sec:four-five-functor}

We fix an $\i$-operad $\aO^\otimes$, an $\i$-topos $\X$, and an $\aO$-monoidal presentable $\i$-category $\C^{\otimes}$.  The goal now is to
construct, for each object $S$ of $\X$, an $\aO$-monoidal $\i$-category
$\aC_{/S}^\otimes$ with underlying $\i$-category $\aC_{/S}$ such that the
restriction functor $f^*\colon \aC_{/T}\to\aC_{/S}$ induced by a map of Kan
complexes $f:S\to T$ is  $\aO$-monoidal.

Let $\mathrm{Alg}_\aO(\PrLR)$ denote the pullback
\[
\xymatrix{
\mathrm{Alg}_\aO(\PrLR)\ar[r]\ar[d] & \mathrm{Alg}_\aO(\PrL)\ar[d]\\
\PrLR\ar[r] & \PrL,}
\]
that is, the subcategory of $\mathrm{Alg}_\aO(\PrL)$ consisting of those $\aO$-monoidal functors which are also right adjoints.
To analyze the behavior of limits in $\mathrm{Alg}_\aO(\PrLR)$ we require the following technical lemma.

\begin{lemma}\label{lem:lim}
The subcategory $\widehat{\Cat}^\mathrm{R}_\i\subset\widehat{\Cat}_\i$ spanned by the complete $\i$-categories and the limit preserving functors is stable under pullbacks.
\end{lemma}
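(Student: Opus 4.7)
The plan is to fix a small simplicial set $K$ and show that $K$-indexed limits in a pullback $\aE = \aA \times_\aD \aB$ exist and are computed and preserved componentwise, under the assumption that $\aA, \aB, \aD$ are complete and the structure maps $f\colon\aA\to\aD$, $g\colon\aB\to\aD$ preserve limits.

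First I would reformulate the hypotheses via right adjoints: $\aC$ has all $K$-indexed limits if and only if the constant-diagram functor $\delta_\aC\colon\aC\to\Fun(K,\aC)$ admits a right adjoint $\lim_K$, and a functor $h\colon\aC\to\aC'$ preserves $K$-indexed limits if and only if the canonical Beck--Chevalley comparison $h\circ\lim_K\to\lim_K\circ\, h_*$ is an equivalence, where $h_*\colon\Fun(K,\aC)\to\Fun(K,\aC')$ is post-composition with $h$.

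Second, since the cotensor $\Fun(K,-)$ on $\widehat{\Cat}_\i$ is right adjoint to the product with $K$, it preserves the given pullback, yielding an equivalence
\[
\Fun(K,\aE) \simeq \Fun(K,\aA)\times_{\Fun(K,\aD)}\Fun(K,\aB)
\]
under which the diagonal of $\aE$ corresponds to the map of pullbacks induced by the three diagonals of $\aA, \aB, \aD$. A general adjunction-calculus argument then produces a right adjoint on the pullback from the three component right adjoints using the Beck--Chevalley equivalences supplied by the assumption that $f$ and $g$ preserve limits. Concretely, the limit of a diagram $p\colon K\to\aE$, encoded as a pair of diagrams $p_\aA\colon K\to\aA$, $p_\aB\colon K\to\aB$ together with a specified equivalence $f\circ p_\aA\simeq g\circ p_\aB$, is the object of $\aE$ built from $\lim p_\aA$, $\lim p_\aB$, and the composite equivalence
\[
f(\lim p_\aA) \simeq \lim(f\circ p_\aA) \simeq \lim(g\circ p_\aB) \simeq g(\lim p_\aB)
\]
coming from limit-preservation of $f$ and $g$. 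By inspection, the two projections $\aE\to\aA$ and $\aE\to\aB$ send this limit to the limits of the projected diagrams, hence preserve $K$-limits.

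I do not expect any deep obstacles; the essential content is the formal observation that limits and limit-preserving functors are stable under pullback. The only mild technical point will be making the ``assemble right adjoints along a pullback'' step rigorous in $\widehat{\Cat}_\i$ --- this can be done either by passing to the homotopy $2$-category and invoking the standard mate calculus, or by unwinding the universal property of the pullback of $\i$-categories and checking directly that a limit cone in $\aE$ is nothing more than a compatible pair of limit cones in $\aA$ and $\aB$.
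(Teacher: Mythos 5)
Your argument for why the pullback $\aE = \aA\times_\aD\aB$ is complete, and why its projections preserve limits, is essentially the paper's argument: apply the cotensor $\Fun(K,-)$ to the pullback square (it preserves pullbacks, being a right adjoint), and build the right adjoint to the constant-diagram functor $\aE\to\Fun(K,\aE)$ componentwise from the right adjoints for $\aA$, $\aB$, $\aD$, using the hypothesis that $f$ and $g$ preserve limits to get the required Beck--Chevalley equivalences. So far this matches the paper.

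However, there is a step you have omitted, and the paper does address it. The subcategory $\widehat{\Cat}^\mathrm{R}_\i\subset\widehat{\Cat}_\i$ is \emph{not full} (it contains all complete $\i$-categories but only the limit-preserving functors between them). Therefore ``stable under pullbacks'' requires more than the two things you prove --- that $\aE$ is complete and that the projections to $\aA$ and $\aB$ preserve limits. One must also verify the universal property of the pullback \emph{within} $\widehat{\Cat}^\mathrm{R}_\i$: for any complete $\aE'$ equipped with limit-preserving functors $\aE'\to\aA$ and $\aE'\to\aB$ commuting over $\aD$, the (contractibly unique) induced functor $\aE'\to\aE$ must itself be shown to be limit-preserving. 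Without this, you cannot conclude that the pullback formed in $\widehat{\Cat}_\i$ remains a pullback in $\widehat{\Cat}^\mathrm{R}_\i$, which is exactly what Corollary~\ref{cor:lim} needs. This last check is easy given what you have already established --- since limits in $\aE$ are computed componentwise, a functor into $\aE$ preserves limits if and only if its two projections do --- but it is a genuine part of the proof and you should state it.
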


\begin{proof}
Suppose given a pullback diagram
\[
\xymatrix{
\aA\ar[r]^f\ar[d]^g & \aB\ar[d]^h\\
\aC\ar[r]^i & \aD}
\]
in $\widehat{\Cat}_\i$ such that $\aB,\aC,\aD$ are complete $\i$-categories and $h,i$ are limit preserving functors.
We first show that $\aA$ is complete, which amounts to showing that the constant diagram functor $\aA\to\Fun(K,\aA)$ admits a right adjoint $\lim:\Fun(K,\aA)\to\aA$.
Since the corresponding result holds for $\aB,\aC,\aD$ by assumption, we obtain a map
\[
\Fun(K,\aA)\simeq\Fun(K,\aB)\times_{\Fun(K,\aD)}\Fun(K,\aC)\to\aB\times_\aD\aC\simeq\aA
\]
which is easily seen to be right adjoint to $\aA\to\Fun(K,\aA)$ since
mapping spaces in a limit of $\i$-categories are computed as the limit
of the mapping spaces.  To see that the projections $f\colon \aA\to\aB$ and
$g\colon \aA\to\aC$ preserve limits, we note that, by construction,
$\lim\colon \Fun(K,\aA)\to\aA$ is the pullback of the diagram of maps
$\lim\colon \Fun(K,\aB)\to\aB$ and $\lim:\Fun(K,\aC)\to\aC$ over
$\lim\colon \Fun(K,\aD)\to\aD$, so this is clear.
Finally, given a commutative diagram
\[
\xymatrix{
\aA'\ar[r]^f\ar[d]^g & \aB\ar[d]^h\\
\aC\ar[r]^i & \aD}
\]
in $\widehat{\Cat}^\mathrm{R}_\i$, we must show that the $\i$-groupoid of limit preserving functors from $\aA'$ to $\aA$ over $\aB\to\aD\leftarrow\aC$ is contractible.
This follows from the fact that these functors form a full $\i$-subgroupoid of the $\i$-groupoid of all functors from $\aA'$ to $\aA$ over $\aB\to\aD\leftarrow\aC$ coupled with the observations that this latter $\i$-groupoid is contractible and that the unique such functor preserves limits.
\end{proof}

\begin{remark}
A similar argument shows that the subcategory $\widehat{\Cat}^\mathrm{R}_\i\subset\widehat{\Cat}_\i$ is stable under all small limits.
\end{remark}

\begin{corollary}\label{cor:lim}
The $\i$-category $\Alg_\aO(\PrLR)$ admits all small limits
and the inclusion of the subcategory $\Alg_\aO(\PrLR)\subset\Alg_\aO(\PrL)$ preserves them. 
\end{corollary}

\begin{proof}
This is immediate from lemma \ref{lem:lim} above, as $\Alg_\aO(\PrLR)$ is the pullback of a diagram of complete $\i$-categories and limit preserving functors.
\end{proof}

Our main foundational theorem is the following result, which follows
from corollary~\ref{cor:lim} and the proof of
proposition~\ref{prop:1ff}.

\begin{theorem}\label{thm:paramobj}\label{thm:exist2}
There exists a unique sheaf of presentable $\aO$-monoidal
$\i$-categories $\aC_{/(-)}^\otimes$ on $\X$ whose value at the object $S$
of $\X$ is the $\aO$-monoidal $\i$-category $\aC_{/S}^\otimes$ of
$\aC^\otimes$-valued sheaves on $S$. 
\end{theorem}

\begin{remark}
In the symmetric monoidal context, the right adjoint
$f_*$ is lax $\aO$-monoidal and that the left adjoint $f_!$ is oplax
$\aO$-monoidal.
\end{remark}

We now further suppose that $\aO^\otimes$ comes equipped with a fixed
map $\bE_1^\otimes\to\aO^\otimes$.  This implies (by restriction along this map) that any $\aO$-monoidal $\i$-category $\aC^\otimes$ is equipped with a distinguished monoidal structure $\otimes$. The following lemma is immediate from the adjoint functor theorem.

\begin{lemma}
Let $\aC^\otimes$ be a monoidal presentable $\i$-category.
Then $\aC^\otimes$ is closed in the sense that, for each object $X$ of
$\aC$, the left and right multiplication functors
$X\otimes(-)\colon \aC\to\aC$ and $(-)\otimes X\colon \aC\to\aC$ admit right adjoints.
\end{lemma}

Writing $\otimes$ for the monoidal product obtained by restriction
along the map $\bE_1^\otimes\to\aO^\otimes$, then, for each object $X\in\aC$, we write
\[
F(X,-)\colon \aC\to\aC
\]
for the right adjoint of the right multiplication functor $(-)\otimes
X\colon \aC\to\aC$.
As $S$ varies over all spaces,
the base-change functors and closed tensor structures collectively
give rise to a (not necessarily symmetric) sort of ``Wirthm\"uller
context'' \cite{FauskHuMay}. 

In the context of parametrized spaces, $f^*$ is a symmetric monoidal
functor.  The situation of a symmetric monoidal functor with
left and right adjoints gives rise to a series of compatibility
formulas (e.g., the projection formula).  Following \cite{FauskHuMay},
we now abstract this relationship into what we will refer to as a
Wirthm\"uller context.

We continue to fix an $\i$-operad $\aO^\otimes$ over $\bE_1$ and an
$\aO$-monoidal $\i$-category $\aC^\otimes$.  To say more, we now
suppose that the map $\bE_1^{\otimes}\to\aO^\otimes$ factors
through $\bE_\i^\otimes$, which is to say that $\aC^\otimes$ is a {\em
symmetric} monoidal presentable $\i$-category \cite[Proposition
4.1.1.20]{HTT}.
Specializing the definition of $\mathrm{Alg}_\aO(\PrLR)$ to the case of the terminal $\i$-operad,
we obtain the $\i$-category $\mathrm{CAlg}(\PrLR)$ of symmetric monoidal presentable $\i$-categories and symmetric monoidal functors which are simultaneously left and right adjoints.

\begin{definition}\label{defn:wirth}
A Wirthm\"uller context is a $\mathrm{CAlg}(\PrLR)$-valued
sheaf on $\X$; that is, a limit preserving functor 
\[
\X^{\op}\longrightarrow\mathrm{CAlg}(\PrLR).
\]
\end{definition}

Some of the useful consequences of the existence of a Wirthm\"uller
context are summarized in the following standard proposition. 

\begin{proposition}\label{prop:omniwirth}
Let $\X$ be an $\i$-topos, $\aC^\otimes$ a symmetric monoidal presentable $\i$-category,
$f\colon S\to T$ a morphism in $\X$, $X$ an
object of $\aC_{/S}^{\otimes}$, and $Y$ and $Z$ objects of
$\aC_{/T}^\otimes$.  Then there are natural equivalences:  
\begin{enumerate}
\item $f^* (Y \otimes_T Z) \htp f^*Y \otimes_S f^*Z$
\item $F_T(Y, f_* X) \htp f_* F_S(f^* Y,X)$,
\item $f^* F_T(Y,Z) \htp F_S (f^*Y, f^*Z)$,
\item $f_! (f^* Y \otimes_S X) \htp Y \otimes_T f_! X$,
\item $F_T (f_! X,Y) \htp f_* F_S(X,f^* Y)$.
\end{enumerate}
\end{proposition}

\begin{proof}
As explained in~\cite{FauskHuMay}, we can deduce all of these
equivalences from (1) and the projection formula (4).  First, the
equation (1) follows immediately from the fact that
$f^*\colon \C^\otimes_{/T}\to\C^\otimes_{/S}$ is a strong symmetric
monoidal functor.  Next, (4) is immediate whenever $\C=\Pre(\T)$ is a
presheaf $\i$-topos since (replacing $\X$ with
$\C\otimes\X\simeq\Fun(\T^{\op},\X)$) the projection formula holds
inside any $\i$-topos: $Y\times_T S\times_S X\simeq Y\times_T X$.  To
conclude (4) in general, we use the fact that any symmetric monoidal
presentable $\i$-category $\C$ is a symmetric monoidal accessible
localization of some $\Pre(\T)$; since the localization functor is a
strong symmetric monoidal left adjoint~\cite[2.2.1.9]{HA}, we can
deduce (4) from the result for presheaves.

We now explain how to obtain the remaining equivalences.  Let
$f^*\colon \B^\otimes\to\A^\otimes$ be a morphism of commutative
algebra objects in $\PrLR$.  By the relative adjoint functor
theorem~\cite[8.3.2.7]{HA}, $f_*\colon \A^\otimes\to\B^\otimes$ is lax
symmetric monoidal.  Using~\cite[1.7]{BarwickGlasmanNardin} to study
the opposite category, we can analogously deduce that
$f_!\colon \A^\otimes\to\B^\otimes$ is oplax symmetric monoidal.  Now
suppose we are given an object $X$ of $\A$ and objects $Y$ and $Z$ of
$\B$.  Then (2) follows from (1) because
\begin{align*}
&\map(Z,F(Y,f_*X))\simeq\map(Z\otimes Y,f_*X)\simeq\map(f^*Z\otimes
f^*Y,X)\simeq\\ &\map(f^*Z,F(f^*Y,X))\simeq\map(Z,f_*F(f^*Y,X)),
\end{align*}
and (3) follows from (2) and (4) since the unit applied to $Z$ gives a
map
\[
F(Y,Z))\to F(Y,f_*f^*Z)\simeq f_*F(f^*Y,f^*Z)
\]
whose adjoint is the map $f^*F(Y,Z)\to F(f^*Y,f^*Z)$, which is an
equivalence because
\begin{align*}
&\map(X,f^*F(Y,Z))\simeq\map(f_!X,F(Y,Z))\simeq\map(f_!X\otimes
Y,Z)\simeq\\ &\map(f_!(X\otimes f^*Y),Z)\simeq\map(X\otimes
f^*Y,f^*Z)\simeq\map(X,F(f^*Y,f^*Z)).
\end{align*}
Finally, (5) follows because
\begin{align*}
&\map(Z,F(f_!X,Y))\simeq\map(Z\otimes
f_!X,Y)\simeq\map(f_!(f^*Z\otimes X),Y)\simeq\\ &\map(f^*Z\otimes
X,f^*Y)\simeq\map(f^*Z,F(X,f^*Y))\simeq\map(Z,f_*F(X,f^*Y)).
\end{align*}
(See also \cite[2.2.2,11.4.1]{MS} for a verification in the particular
case of parametrized spaces and spectra.)
\end{proof}

We can also consider this situation when $\aC^{\otimes}$ is an
$\bE_n$-monoidal presentable $\i$-category.  In this case, provided
that $n > 2$, the theory is the same because the equivalences of
proposition~\ref{prop:omniwirth} arise as isomorphisms in the homotopy
category, and for $n > 2$ an $\bE_n$-monoidal presentable
$\i$-category has a closed symmetric monoidal homotopy category.  (We
also use the fact that $\bE_n$-monoidal functors induce symmetric
monoidal functors on the homotopy category in this case.)  We suspect
that analogous formulas hold for the braided monoidal case $n = 2$ and
even the monoidal case $n=1$.

Finally, when $\X=\aS$, we have the following basic existence result
as a corollary of theorem~\ref{thm:paramobj}. 

\begin{corollary}\label{cor:wirthexists}
A Wirthm\"uller context over $\aS$ determines, and is determined by, a symmetric
monoidal presentable $\i$-category. 
\end{corollary}

\begin{remark}
This generalizes in a obvious way to prehsheaf $\i$-topoi, and in
straightforward but less obvious way to $\i$-topoi.  We leave the
details to the interested reader. 
\end{remark}

\subsection{Parametrized objects over spaces with multiplicative structure}

In this section, we study the multiplicative structures that arise on
$\i$-categories of parametrized objects over $\bE_n$ spaces.  In the
previous sections, the multiplicative structure on $\aC_{/S}$ was
obtained pointwise, or, equivalently, from the evident external
product via pullback along the diagonal $\Delta \colon S \to S \times
S$ of the base space.  Here, the multiplicative structures arise from
an actual product on $S$ itself.

Recall that the $\infty$-categorical Day convolution product \cite[\S
6.3]{HA} is a consequence of the existence of a symmetric monoidal
functor from spaces to presentable $\infty$-categories.  The relevant
functor on objects agrees with the sheaf
$\aS_{/(-)} \colon \aS\to\PrL$ on objects, but takes maps of Kan
complexes $f \colon S \to T$ to the left adjoint
$f_! \colon \aS_{/S}\to\aS_{/T}$ of $f^*$.
Since we will be interested in $\i$-categories of modules over an
$\aO$-monoidal presentable $\i$-category $\aR$, for the remainder of
this section we will replace $\PrL$ with the equivalent $\i$-category
$\Mod_\aS$ of $\aS$-module objects in $\PrL$. 

\begin{proposition}
There is a unique colimit-preserving functor
\[
\Pre \colon \aS\to\Mod_\aS
\]
whose value at the space $S$ is the $\i$-topos $\aS_{/S}$ of
presheaves of spaces on $S$. 
\end{proposition}

\begin{proof}
The $\i$-category of spaces $\aS$ is freely generated under colimits by
the one-point space \cite[5.1.5.8]{HTT}.  Since any space $S$ is
equivalent to the $S$-indexed colimit of the constant diagram on the
point, it follows that  
\[
\Pre(S) \htp S\otimes \aS.
\]
\end{proof}

We have the following proposition as a consequence of the properties of the $\i$-categorical Day
convolution project \cite[6.3.1.2]{HA}.

\begin{proposition}\label{prop:paramsym}
The functor $\Pre \colon \aS\to\Mod_{\spaces}$ extends to a symmetric
monoidal functor 
\[
\Pre \colon \spaces^\otimes\to\Mod_{\spaces}^\otimes.
\] 
\end{proposition}

It follows from proposition~\ref{prop:paramsym} that
the functor $\Pre$ preserves multiplicative structures: 

\begin{corollary}
Let $\cO$ be an $\i$-operad and let $X$ be an $\cO$-algebra object of
$\aS$.  Then $\Pre(X)$ is an $\cO$-algebra object of $\Mod_\aS$.
\end{corollary}

We now assume that $\cO$ is a unital and coherent $\i$-operad.  Since
$\Mod_{\spaces}$ is a symmetric monoidal $\i$-category, we can consider
$\cO$-algebra objects in $\Mod_{\spaces}$.  We require $\cO$ to be unital
since we will need to consider the unit map $\eta \colon \spaces\to\R$ of
an $\cO$-algebra object $\R$ of $\Mod_{\spaces}$.

Recall that if $\R$ is an object of $\Alg_{\cO}(\Mod_{\spaces})$ then
$\Mod^\cO_\R=\Mod^\cO_\R(\Mod_{\spaces}\times_\Gamma\cO)$ is the $\infty$-category of
$\R$-module objects in $\Mod_{\spaces}$ \cite[\S 3.3.3]{HA}.

\begin{proposition}
Let $\cO$ be a coherent and unital $\i$-operad and let $\R$ be an
$\cO$-algebra object of $\Mod_{\spaces}$.  Then there exists a unique
colimit-preserving functor 
\[
\Pre_\R \colon \spaces\to\Mod^\cO_\R
\]
whose value on the point is the ``free rank one''
$\R$-module $\R$.
\end{proposition}

\begin{proof}
The functor 
\[
\eta^* \colon (-)\otimes_{\spaces}\R \colon \Mod^\cO_{\spaces}\to\Mod^\cO_\R 
\]
preserves colimits, as it is left adjoint to the restriction
\[
\eta_* \colon \Mod^\cO_\R\to\Mod^\cO_{\spaces}
\]
along $\eta \colon \spaces\to\R$.  Thus, the composite
\[
\Pre_\R\simeq\eta^*\circ \Pre \colon \spaces\to\Mod^\cO_{\spaces}\to\Mod^\cO_\R
\]
preserves colimits, and sends the one-point space $*$ to
$\R\simeq\spaces\otimes_{\spaces}\R$.
\end{proof}

The functor $\Pre_\R$ also preserves multiplicative structures:

\begin{corollary}
Let $\cO$ be an $\i$-operad and let $X$ be an $\cO$-algebra object of
$\spaces$.  Then $\Pre_\R(X)$ is an $\cO$-algebra object of $\Mod^\cO_\R$.
\end{corollary}

The main example of this phenomenon which will be of interest to us is
the case of an $\cO$-algebra object $\R$ of $\Mod_{\spectra}$, where $\cO$ is
a coherent $\i$-operad under $\bE_1$.  Then $\R$ is in particular an
associative algebra object of $\Mod_{\spectra}$, and so it has a Picard
$\i$-groupoid $\Pic(\R)$, the full subgroupoid of $\R$ spanned by the
invertible objects.  

\section{Categorification of Picard group}
\label{sec:picard-i-groupoids}

In this section we define and study the Picard $\i$-groupoid of an
$\aO$-monoidal stable presentable $\i$-category $\R$ (for suitable
$\i$-operads $\aO^\otimes$) and the categories of parametrized objects
over Picard $\i$-groupoids.  Roughly speaking, we define the Picard
$\i$-groupoid of $\R$ as the space of invertible objects in $\R$; the
work of the section is to keep track of the multiplicative structure
inherited from $\R$.  The main theorem of this section describes the
categorified $\Pic$ as participating in an adjunction that (when
specialized to modules over an $\bE_n$ ring spectrum) gives rise to
the Thom spectrum functor as the counit.  

Note that contrary to the standard convention, our Picard
$\i$-groupoids will be grouplike $\aO$-spaces, not necessarily
grouplike $\bE_\i$-spaces.  As a consequence, we begin by recalling
some details concerning grouplike $\bE_1$-spaces.  In any $\i$-topos
(in particular, such as the $\i$-category of spaces), there is a
notion of a grouplike $\bE_1$-space \cite[5.1.3.2]{HA}.  Specifically,
we have the following characterization \cite[5.1.3.5]{HA}.

\begin{definition}
An $\bE_1$-space $X$ is said to be {\em grouplike} if the monoid $\pi_0
X$ is a group.  Given a map $\eta\colon \bE_1 \to\cO$ of coherent
$\i$-operads, we say that an $\cO$-monoidal space $X$ is {\em
grouplike} if $\eta^*X$ is a grouplike $\bE_1$-space.
\end{definition}

Given any $\cO$-monoidal space $X$, we can restrict to the maximal
grouplike subspace of $X$.

\begin{lemma}\label{lem:maxgrouplike}
For an $\cO$-monoidal space $X$, there is a maximal grouplike subspace $\GL X$.
That is, the inclusion 
\[
\Mon_{\aO}^{\gp}(\spaces) \to \Mon_{\aO}(\spaces)
\]
of grouplike $\cO$-monoidal spaces into $\cO$-monoidal spaces has a right adjoint $\GL$ given by passage to the maximal grouplike $\cO$-monoidal space.
\end{lemma}

\begin{proof}
The inclusion functor preserves colimits \cite[5.1.3.5]{HA} and therefore the adjoint functor theorem implies that there exists a right adjoint $\GL$.  We can explicitly identify this as follows:  Given an $\cO$-monoidal space $X$, $\pi_0(X)$ is a monoid.
The maximal grouplike space $\GL X$ is the full subgroupoid obtained by passage to
the invertible elements of $\pi_0(X)$ (i.e., the maximal group contained in $\pi_0(X)$).  Since any product of invertible objects in $\pi_0(X)$ is invertible, the criterion
of \cite[2.2.1.1]{HA} implies that this space is itself $\cO$-monoidal.  Because $\GL X$ is a full subgroupoid of $X$, it is clear that any map from a grouplike $\cO$-monoidal space uniquely factors through it.
\end{proof}

More generally, given any $\aO$-monoidal $\i$-category $\R$, we can pass to the full subcategory of invertible objects in $\R$, which we will denote by $\R^{\times}$.  
Explicitly, this can be built as the
pullback
\begin{equation}\label{eqn:pic}
\xymatrix{
(\R^{\otimes})^{\times} \ar[r] \ar[d] & \R^{\otimes} \ar[d] \\
\Ho(\R^{\otimes})^\times \ar[r] & \Ho(R^{\otimes}),
}
\end{equation}
where $\Ho(\R^{\otimes})^\times$ denotes the full monoidal
subcategory of the (ordinary) monoidal category
$\Ho(\R^{\otimes})$ spanned by the invertible objects.  This is the 
$\aO$-monoidal $\i$-category of invertible objects.  The same argument
as in the proof of lemma~\ref{lem:maxgrouplike} proves the following lemma.

\begin{lemma}
For an $\aO$-monoidal $\i$-category $\R$, the full $\i$-subcategory
$\R^{\times}$ of invertible objects is an $\aO$-monoidal
$\i$-category.
\end{lemma}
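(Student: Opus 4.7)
The plan is to imitate the proof of Lemma~\ref{lem:maxgrouplike} essentially verbatim, with $\Mon_{\aO}(\aT)$ replaced by $\Alg_{\aO}(\widehat{\Cat}_\i)$. The pullback diagram (\ref{eqn:pic}) already exhibits $(\R^{\otimes})^{\times}$ as the preimage in $\R^{\otimes}$ of the full ordinary monoidal subcategory $\Ho(\R^{\otimes})^{\times}$ spanned by the invertible objects, so the task reduces to showing that the composite
\[
(\R^{\otimes})^{\times} \to \R^{\otimes} \to \aO^{\otimes}
\]
is itself a coCartesian fibration of $\i$-operads.

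I would then invoke the $\aO$-monoidal analogue of the criterion of \cite[2.2.1.1]{HA}: a full $\i$-subcategory $\aD \subset \R$ determines an $\aO$-monoidal subcategory of $\R^{\otimes}$ precisely when $\aD$ is closed under the $\aO$-ary tensor operations of $\R^{\otimes}$, meaning that any coCartesian edge of $\R^{\otimes} \to \aO^{\otimes}$ lying over an active operation, whose source lies in $\aD$, has target in $\aD$. Since this closure condition can be tested after passage to the homotopy category $\Ho(\R)$, it suffices to verify the corresponding $1$-categorical statement. But that is classical: in any ordinary monoidal category, the unit is invertible, and if $X, Y$ have inverses $X^{-1}, Y^{-1}$ then $Y^{-1} \otimes X^{-1}$ is inverse to $X \otimes Y$, so by induction any finite tensor product of invertible objects is invertible. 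This is precisely what justifies writing $\Ho(\R^{\otimes})^{\times}$ as a full \emph{monoidal} subcategory of $\Ho(\R^{\otimes})$ in (\ref{eqn:pic}) in the first place.

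Combining these steps, the pullback (\ref{eqn:pic}) exhibits $(\R^{\otimes})^{\times}$ as the fiberwise preimage of a full monoidal sub-$1$-category closed under the tensor operations, which by the criterion of \cite[2.2.1.1]{HA} produces a coCartesian fibration of $\i$-operads $(\R^{\otimes})^{\times} \to \aO^{\otimes}$; this is the desired $\aO$-monoidal structure on $\R^{\times}$. The main obstacle, as in Lemma~\ref{lem:maxgrouplike}, is the purely bookkeeping task of matching the hypotheses of \cite[2.2.1.1]{HA} (whose statement in \cite{HA} is given in the symmetric monoidal setting) with the pullback presentation in (\ref{eqn:pic}), rather than any substantive calculation; the generalization of that criterion from symmetric monoidal to general $\aO$-monoidal categories is formal, as the hypothesis is purely a fiberwise closure property.
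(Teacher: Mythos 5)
Your proposal is correct and follows essentially the same route as the paper: the paper's proof simply reads ``the same argument as in the proof of lemma~\ref{lem:maxgrouplike},'' which is precisely the verification you spell out, namely checking that the invertible objects are closed under the tensor operations at the level of the homotopy category and then invoking the criterion of \cite[2.2.1.1]{HA} to lift this to an $\aO$-monoidal subcategory structure.
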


However, we want the Picard object to be a space.  Recall that the
inclusion of $\i$-groupoids into $\infty$-categories preserves
products and has a right adjoint; explicitly, if $\C$ is an
$\i$-category, then $\C^{\htp}$ is the subcategory of $\C$ consisting
of the invertible morphisms.

We can now define the Picard $\i$-groupoid of an $\bE_1$ object in
$\Pr^\L$.

\begin{definition}\label{defn:pic}
Let $\R$ be an $\spectra$-algebra in $\Pr^\L$.  Then $\Pic(\R)$ is the
maximal grouplike $\i$-groupoid $(\R^{\times})^{\htp}$ inside of the monoidal
$\i$-category $\R^{\times}$.
When $\R=\Mod_R$ for an $\bE_n$-ring spectrum $R$, $n>1$, we typically
write $\Pic_R$ in place of $\Pic(\R)$.
\end{definition}

When applied to the category of modules over a commutative ring
spectrum $R$, definition~\ref{defn:pic} recovers the usual
construction of the Picard group.  In fact, we can perform this
construction in either order.  First, given $\R$, pass to the full
$\i$-subcategory $\R^{\times}$ of invertible objects in $\R$, and then
take the maximal $\i$-groupoid in $\R^{\times}$.  Equivalently, given
$\R$, pass to the maximal $\i$-groupoid $\R^{\htp}$ contained in $\R$,
then pass to the largest grouplike object inside $\R^{\htp}$.

Furthermore, if $\R$ is a closed symmetric monoidal stable
$\i$-category, we can characterize $\Pic(\R)$ as a subspace of the
subcategory of dualizable objects in $\R$.  (See for example \cite[\S
2]{maypic} for an excellent discussion of this perspective on the
level of homotopy categories.)
In this case, the inverse of $X \in \Pic(\R)$ is the functional dual
$F_{\R}(X,1)$. 
The point is that the equivalences witnessing the invertibility of $X$ are duality data;
this follows from \cite[2.9]{maypic} since $\i$-categorical duality
can be detected on the homotopy category.
It is not difficult to extend the description of
equation~\ref{eqn:pic} and  the inverse to the situation
when $\R$ has weaker monoidal structures, but to state the results 
requires a discussion of duality in these settings which we do not
wish to pursue herein.

In order to obtain a multiplicative structure, we would like to
describe $\Pic(\aR)$ more explicitly as part of an adjunction.  To make
this precise, we first need the following result which allows us to
control the size of the Picard group.

\begin{lemma}
Let $\A$ be a monoidal presentable $\i$-category.  Then there exists a
regular cardinal $\kappa$ such that the inclusion
\[
\Pic(\A^\kappa)\subseteq\Pic(\A)
\]
is an equivalence of $\infty$-groupoids.
In particular, $\Pic(\A)$ is essentially small.
\end{lemma}

\begin{proof}
By \cite[Lemma 6.3.7.12]{HA}, there exists a regular cardinal $\kappa$ such that $\A$ is $\kappa$-presentable, the unit $1_\A$ of $\A$ is $\kappa$-compact, and the full subcategory $\A^\kappa\subset\A$ consisting of the $\kappa$-compact objects is a monoidal subcategory.
Let $A\in\Pic(\A)$ be an invertible object of $\A$.  Since
$\A\simeq\Ind_\kappa(\A^\kappa)$, $A=\colim_I A_i$ for some
$\kappa$-filtered diagram of $\kappa$-compact objects of $\A$.  Since
$A$ has an inverse $B$, $1\simeq A\otimes B\simeq\colim_I (A_i\otimes
B)$, and since $1$ is $\kappa$-compact, the equivalence $1\to\colim_I
(A_i\otimes B)$ factors through a $\kappa$-small stage $J\subset
I$. But then $1\simeq\colim_J (A_j\otimes B)$ implies that 
\[
\colim_J A_j\simeq B^{-1}\simeq\colim_I A_i,
\]
so that $A$ is a $\kappa$-small colimit of $\kappa$-compact objects
and hence itself is $\kappa$-compact. 
\end{proof} 

This now permits us to give the following characterization of $\Pic$.

\begin{theorem}\label{thm:picthm}
Let $\aO$ be a coherent $\infty$-operad equipped with a map
$\bE_1 \to\aO$. 
Then
\[
\Pic \colon\Alg_{\aO}(\PrL)\longrightarrow\Alg^{\gp}_{\aO}(\spaces)
\]
is right adjoint to the free presentable $\i$-category functor
\[
\Pre\colon \Alg^{\gp}_{\aO}(\spaces)\longrightarrow\Alg_{\aO}(\PrL).
\]
\end{theorem}

\begin{proof}
Let $G$ be a grouplike $\aO$-monoidal space and $\R$ a presentable
$\aO$-monoidal $\infty$-category.  Restriction along the Yoneda
embedding $G\to \Pre(G)$ gives maps 
\[
\map_{\Alg_{\aO}(\PrL)}(\Pic(G),\R)\longrightarrow\map_{\widehat{\Cat}_\i}(\Pic(G),U(\R))\longrightarrow\map_{\Alg^{\gp}_{\aO}(\spaces)}(G,\Pic(\R)),
\]
where $U \colon \PrL\longrightarrow\widehat{\Cat}_\i$ denotes the
underlying monoidal $\i$-category functor, such that the composite
is an equivalence.
\end{proof}

The unit of the adjunction is the Yoneda embedding $G\to\R[G]$.  The
counit of the adjunction is the map 
\[
\spaces_{/\Pic(\R)} \longrightarrow\R
\]
adjoint to the identity map $\Pic(\R)\to\Pic(\R)$.
As a functor between presentable $\infty$-categories, this map
preserves colimits and is uniquely determined by the image of
$\Pic(\R)$ in $\R$.  

\begin{remark}
When $\aO$ is a model for the $\bE_n$ operad, we have the following
specialization:
The functor 
\[
\Pic \colon\Alg_{\bE_n}(\PrL)\to\Alg^{\gp}_{\bE_n}(\spaces)
\]
is corepresented by the $\bE_n$-monoidal $\i$-category
$\spaces[\Omega^n\Sigma^n_+*]$.  This is because $\Omega^n\Sigma^n_+*$ is
the free grouplike $\bE_n$-space on a single generator $*$. 
\end{remark}

Passing to the stable setting, the argument for
Theorem~\ref{thm:picthm} yields the following proposition.

\begin{theorem}\label{prop:main}
Let $\bE_1 \to\cO$ be a map of coherent $\i$-operads and let $\R$
be a stable presentable $\cO$-monoidal $\i$-category. 
Then the canonical map
\[
\spectra_{/\Pic(\R)}\longrightarrow\R
\]
is a map of stable presentable $\cO$-monoidal $\i$-categories.
\end{theorem}

We close with a remark about the way in which the work of this section
is a categorification of the classical theory of the space of units of
a ring spectrum.  The multiplicative structure on $\Pic(\R)$ is such
that the canonical map   
\[
\spectra_{/\Pic(\R)}\longrightarrow\R,
\]
adjoint to the inclusion $\Pic(\R)\to\R$ of the invertible objects,
is an $\cO$-algebra map.  Conceptually, this is a categorification of
the adjunction which defines $\GL$. Just as the underlying
infinite loop space functor
$
\Omega^\i \colon \spectra\to\spaces
$
is right adjoint to the symmetric monoidal suspension spectrum functor
$
\Sigma^\i_+ \colon \spaces\to\spectra$,
the forgetful functor
\[
\map_{\spectra} \colon (\spectra,-) \colon {\Pr}^\L_\mathrm{St}\to\spaces
\]
is right adjoint to the symmetric monoidal functor 
\[
\Pre_{\spectra} \colon \spaces\to {\Pr}^\L_\mathrm{St}.
\]

\section{Multiplicative properties of the Thom spectrum
functor}\label{sec:thom}

In this section, we apply the work of the previous section in the
context of the generalized Thom spectrum functor.
Theorem~\ref{prop:main} has the following immediate consequence, which
proves Theorem~\ref{thm:infmain}; this is a generalization of Lewis'
theorem about multiplicative structures on Thom spectra.

\begin{corollary}
Let $\bE_1 \to\cO$ be a map of $\i$-operads and let $\R$ be a
stable presentable $\cO$-monoidal $\i$-category.  The the composite
functor
\[
\spaces_{/\Pic(\R)}\longrightarrow\spectra_{/\Pic(\R)}\longrightarrow\R
\]
is a map of presentable $\cO$-monoidal $\i$-categories.
Moreover, if $\R=\Mod_R$ for an $\bE_n$-algebra object $R$ of
$\spectra$, $n>1$, then this composite is equivalent to the generalized
Thom spectrum functor. 
\end{corollary}

As an application, we use this to proof $R$-module generalizations of
Lewis' results about the multiplicative properties of the Thom
isomorphism theorem.  Lewis proved \cite[\S
IX.7.4]{lewis-may-steinberger} that given an $\bE_n$ classifying map
$f \colon X \to B\GL{\bS}$ such that $Mf$ admits an $\bE_n$
orientation over $R$ (i.e., an $\bE_n$ map $Mf \to R$), then the map
inducing the Thom isomorphism is an $\bE_n$ map.  We now provide a
concise proof of the analogous results for generalized Thom spectra
over $B\GL{R}$.

Assume that $R$ is an $\bE_{n+1}$ ring spectrum and $f$ is an object
of the $\i$-category $\Alg_{/ \bE_n}(\spaces_{/ B\GL{R}})$, i.e., an
$\bE_n$ map of spaces
\[
f \colon X \to B\GL{R}.
\]
One of the main theorems of our previous work on Thom spectra and
units \cite{ABGHR, ABGHR1, ABGHR2} shows that an orientation of the
Thom spectrum $Mf$ is specified by a map $P \to \GL{R}$ in
$\Mod_{\GL{R}}$, where here $P$ is the pullback of the universal
principal $\GL{R}$-bundle along $f$ and $\Mod_{\GL{R}}$ is the
$\i$-category of $\GL{R}$-modules in spaces.  This suggests the
following generalization of an orientation to the setting of $\bE_n$
maps.

\begin{definition}
Assume that $R$ is an $\bE_{n+1}$-ring spectrum.
Let $P$ be an object in $\Alg_{/\bE_n}(\Mod_{\GL{R}})$.
Then the space of $\bE_n$ orientations of $P$ is the space of
$\bE_n$-algebra maps $P\to\GL{R}$ in $\Alg_{/\bE_n}(\Mod_{\GL{R}})$. 
\end{definition}

It is convenient to view the Thom spectrum functor in this light; the
following lemma is an immediate consequence of the
straightening/unstraightening equivalence~\cite[Theorem
2.2.1.2]{HTT}. 

\begin{lemma}
There is an equivalence of $\bE_n$-monoidal
$\i$-categories
\[
\spaces_{/B\GL{R}} \htp \Mod_{\GL{R}},
\]
and hence an equivalence of $\i$-categories
\[
\Alg_{/ \bE_n}(\spaces_{/B\GL{R}}) \htp \Alg_{/\bE_n}(\Mod_{\GL{R}}).
\]
\end{lemma}

As a consequence, the Thom spectrum functor can be written as the
composite:
\[
\Alg_{/ \bE_n}(\spaces_{/B\GL{R}}) \to \Alg_{/ \bE_n}(\ispec_{/B\GL{R}}) \to \Alg_{/
\bE_n}(\ispec),
\]
where the first map is the stabilization.

Now given any object $P$ in $\Alg_{/ \bE_n}(\Mod_{\GL{R}})$, we have the 
following version of the Thom diagonal, given by the $\bE_n$ map
\[
\xymatrix{
\Delta \colon P \ar[r]^-{\id \times *} & P \times ({\GL{R}}\times X)
}
\]
where here ${\GL{R}}\times X$ is the free $\GL{R}$-module on the
space $X$.  We use the fact that both $X$ and $\GL{R}$ are based
spaces. 

Applying the Thom spectrum functor now yields a map
\[
Mf \to Mf \sma (R \sma X_+),
\]
of $\bE_n$-ring spectra.

On the other hand, given an orientation $P \to \GL{R}$, applying the
Thom spectrum functor produces a map
\[
Mf \to R
\]
of $\bE_n$-ring spectra.  Putting these together, we get the composite
\[
Mf \to Mf \sma (R \sma \Sigma^\i_+ X) \to R \sma (R \sma \Sigma^\i_+ X) \to R \sma \Sigma^\i_+ X
\]
which is a map of $\bE_n$-ring spectra realizing the Thom isomorphism: 

\begin{theorem}
An $\bE_n$ orientation $P \to \GL{R}$ in $\Alg_{\bE_n}(\Mod_{\GL{R}})$
gives rise to a map of $\bE_n$-ring spectra
\[
Mf \to R \sma \Sigma^\i_+ X
\]
which is an equivalence and realizes the Thom isomorphism.
\end{theorem}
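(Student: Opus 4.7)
\emph{Proof proposal.} The plan is to separate the two assertions in the theorem: that the constructed composite is a map of $\bE_n$-ring spectra, and that it is an equivalence of underlying spectra. The $\bE_n$-multiplicativity will follow essentially formally from the preceding construction, while the equivalence amounts to the classical Thom isomorphism, which I would prove within the parametrized framework of Section~\ref{sec:param-spac-spectra}.

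For the multiplicative structure, I would observe that the composite
\[
Mf \to Mf \sma R \sma \Sigma^\i_+ X \to R \sma R \sma \Sigma^\i_+ X \to R \sma \Sigma^\i_+ X
\]
is a composite of three $\bE_n$-algebra maps. The generalized Thom diagonal is $\bE_n$ because it arises by applying the $\bE_n$-monoidal generalized Thom spectrum functor of Theorem~\ref{thm:infmain} to the $\bE_n$-algebra diagonal $\Delta\colon P \to P \times (\GL R \times X)$ in $\Mod_{\GL R}$. Smashing the $\bE_n$-orientation $Mf \to R$ with the $\bE_n$-ring $R \sma \Sigma^\i_+ X$ yields an $\bE_n$-map. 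Finally, the multiplication $R \sma R \to R$ is an $\bE_n$-map since $R$ is $\bE_{n+1}$.

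For the underlying equivalence I would recast the construction in the parametrized language. Writing $p\colon X \to *$ for the projection and $\alpha\colon X \to \Pic_R$ for the classifier of the $R$-line bundle associated to $f$, we have $Mf \simeq p_!\alpha^* R$. Under the equivalence $\aT_{/B\GL R}\simeq\Mod_{\GL R}$, an $\bE_n$-orientation $P\to\GL R$ corresponds to a trivialization of $\alpha$, i.e.\ an $\bE_n$-equivalence $\alpha^* R \simeq p^*R$ of $R$-line bundles over $X$. The projection formula of Proposition~\ref{prop:omniwirth}(4) then yields
\[
Mf \simeq p_!\alpha^*R \simeq p_!p^*R \simeq R \sma p_!\bS_X \simeq R \sma \Sigma^\i_+ X,
\]
where $\bS_X$ denotes the unit of the symmetric monoidal category of parametrized spectra over $X$.

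The main obstacle is identifying this abstractly-obtained equivalence with the concrete composite written down in the theorem. I would handle this by recognizing the Thom diagonal $Mf \to Mf \sma \Sigma^\i_+ X$ as the image under $p_!$ of the unit $\alpha^*R \to \alpha^*R \otimes_X p^*\Sigma^\i_+ X$ of the smash product structure on $\Mod_R[X]$; composing with the orientation then corresponds to replacing $\alpha^*R$ with $p^*R$ via the trivialization and contracting via the multiplication of $R$. A final coherence check is needed to ensure all identifications are $\bE_n$-multiplicative; this follows from the $\bE_n$-monoidality of the base-change functors established by the Wirthm\"uller context of Section~\ref{sec:wirthmuller-contexts}, together with the fact that the projection formula is itself an $\bE_n$-monoidal natural equivalence.
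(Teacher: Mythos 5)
Your $\bE_n$-multiplicativity argument is precisely the paper's: the Thom diagonal $\Delta \colon P \to P \times (\GL{R} \times X)$ and the orientation $P \to \GL{R}$ are $\bE_n$-maps in $\Mod_{\GL{R}}$, and passing both through the $\bE_n$-monoidal generalized Thom spectrum functor yields the composite $Mf \to Mf \sma (R\sma\Sigma^\i_+X) \to R\sma(R\sma\Sigma^\i_+X) \to R\sma\Sigma^\i_+X$ as a chain of $\bE_n$-ring maps. Where you go beyond the paper is the underlying equivalence: the paper simply asserts that the composite realizes the classical Thom isomorphism, whereas you supply a projection-formula derivation $Mf \simeq p_!\alpha^*R \simeq p_!p^*R \simeq R\sma p_!\bS_X \simeq R\sma\Sigma^\i_+X$, using that the orientation corresponds (via $\T_{/B\GL{R}} \simeq \Mod_{\GL{R}}$) to a trivialization of the $R$-line bundle. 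That derivation is sound, with one terminological slip: the map $\alpha^*R \to \alpha^*R\otimes_X p^*\Sigma^\i_+X$ is induced by the unit $\bS_X \to p^*p_!\bS_X$ of the $(p_!,p^*)$ adjunction, not by the ``unit of the smash product structure.'' You correctly flag that matching this abstract equivalence with the explicit Thom-diagonal composite requires a coherence check, and that step is left as a sketch — but the paper leaves it entirely unaddressed, so your proposal does not fall short of the paper's own level of rigor; if anything it fills in detail the paper elides.
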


\appendix

\section{The Brauer group and twisted parametrized spectra}\label{sec:brauer}

In this section, we indicate how to categorify the definition of
the Picard group; this produces a delooping which should be regarded
as the Brauer group of an $E_\i$ ring spectrum.  The connections to
the classical definitions of the Brauer group have been studied by the
third author with various collaborators \cite{G1, G2}.  We do not go
into detail about any of the applications of this here, other than to
briefly observe that this definition allows us to situate the work of
Douglas on ``twisted parametrized spectra'' \cite{douglas} in our
context.

\begin{definition}\label{defn:brauer}
Let $R$ be a $\bE_\i$-ring spectrum.  The Brauer group of $R$ 
is 
\[
\mathrm{Br}_R = \Pic(\Mod_{\Mod_R}(\PrL)^\mathrm{cg}),
\]
the Picard $\infty$-groupoid of the symmetric monoidal
$\infty$-category $\Mod_R^\omega$ of compactly generated $\Mod_R$-modules in
$\PR^\L$, the $\i$-category of presentable $\i$-categories.  
\end{definition}

It is straightforward to check that the Brauer group of $R$ provides a
delooping of the Picard group.  

\begin{lemma}
Let $R$ be an $\bE_\i$-ring spectrum.  There is a natural
equivalence
\[
\Pic_R\simeq\Omega\mathrm{Br}_R.
\]
\end{lemma}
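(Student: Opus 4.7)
The plan is to identify $\Omega \mathrm{Br}_R$ with the space of autoequivalences of the unit object of the symmetric monoidal $\infty$-category $\Mod_{\Mod_R}(\PrL)^{\mathrm{cg}}$, and then recognize this space of autoequivalences as $\Pic_R$ via Morita theory.

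First, I would invoke the general principle that for any grouplike $\bE_\infty$-space $X$ (viewed as pointed at the unit), the loop space $\Omega X$ is equivalent to the space of automorphisms of the unit element. Applied to the Picard $\infty$-groupoid of any symmetric monoidal presentable $\infty$-category $\aA$, this gives
\[
\Omega \Pic(\aA) \simeq \mathrm{Aut}_{\aA}(1_\aA),
\]
the $\bE_\infty$-space of autoequivalences of the monoidal unit. Specializing to $\aA = \Mod_{\Mod_R}(\PrL)^{\mathrm{cg}}$, the unit object is $\Mod_R$ itself regarded as a module category over itself, so
\[
\Omega \mathrm{Br}_R \simeq \mathrm{Aut}_{\Mod_{\Mod_R}(\PrL)^{\mathrm{cg}}}(\Mod_R).
\]

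The main content of the proof is to identify this automorphism space with $\Pic_R$. The key input is a form of Morita theory: for an $\bE_\infty$-ring $R$, the $\Mod_R$-linear colimit-preserving endofunctors of $\Mod_R$ are (via the evaluation-at-$R$ functor) equivalent to $\Mod_R$ itself, since $R$ is a compact generator of $\Mod_R$ and any colimit-preserving $\Mod_R$-linear functor $\Mod_R \to \Mod_R$ is determined by its value on $R$. Under this equivalence, composition of endofunctors corresponds to the tensor product over $R$, so the full monoidal $\infty$-category of $\Mod_R$-linear endofunctors is equivalent to $\Mod_R^\otimes$. Passing to invertible objects on both sides and then to the maximal $\infty$-groupoid yields the desired equivalence
\[
\mathrm{Aut}_{\Mod_{\Mod_R}(\PrL)^{\mathrm{cg}}}(\Mod_R) \simeq \Pic(\Mod_R) = \Pic_R
\]
as grouplike $\bE_\infty$-spaces. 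Combining with the preceding display gives $\Omega \mathrm{Br}_R \simeq \Pic_R$.

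The main obstacle is making the Morita identification rigorous in the $\infty$-categorical framework: one needs that the functor $\Mod_R \to \Fun^{\mathrm{L}}_{\Mod_R}(\Mod_R, \Mod_R)$ sending $M$ to $(-) \otimes_R M$ is a symmetric monoidal equivalence, where the target is the endomorphism algebra of $\Mod_R$ in $\Mod_{\Mod_R}(\PrL)^{\mathrm{cg}}$. This is essentially Lurie's self-duality of $\Mod_R$ as a $\Mod_R$-module together with the compatibility of the tensor product with the internal hom of $\Mod_{\Mod_R}(\PrL)^{\mathrm{cg}}$; since the paper freely uses \cite{HA} for such foundational material, one can cite the relevant results there. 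The restriction to compactly generated modules is what makes the internal hom behave well and ensures that $\Mod_R$ is dualizable as an object of the ambient symmetric monoidal $\infty$-category.
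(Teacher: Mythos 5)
The paper supplies no proof of this lemma — it is preceded only by the remark that the delooping relation is ``straightforward to check'' — so there is no argument in the paper to compare against; your proposal fills that gap, and it is correct. Looping $\mathrm{Br}_R = \Pic\bigl(\Mod_{\Mod_R}(\PrL)^{\mathrm{cg}}\bigr)$ at the unit $\Mod_R$ yields $\mathrm{Aut}_{\Mod_{\Mod_R}(\PrL)^{\mathrm{cg}}}(\Mod_R)$ because the Picard $\infty$-groupoid is a full sub-$\infty$-groupoid of the maximal sub-$\infty$-groupoid, and the Morita identification of $\Mod_R$-linear colimit-preserving endofunctors of $\Mod_R$ with $\Mod_R$ via evaluation at the compact generator $R$ (with composition going to $\otimes_R$) then gives $\mathrm{Aut}(\Mod_R)\simeq\Pic_R$. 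Two small points worth making explicit when writing this up: the restriction to compactly generated modules and compact-preserving morphisms does not alter the automorphism space of the unit, since any self-equivalence of $\Mod_R$ automatically preserves compact objects; and the $\bE_\infty$-multiplication on $\Omega\mathrm{Br}_R$ being matched with that on $\Pic_R$ uses the Eckmann--Hilton/Dunn interchange between loop concatenation and the ambient symmetric monoidal product on $\Mod_{\Mod_R}(\PrL)^{\mathrm{cg}}$, which is what you are implicitly invoking when you ask the Morita equivalence to be symmetric monoidal rather than merely monoidal.
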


The Brauer group now provides the proper context to define twisted
parametrized spectra:

\begin{definition}[Haunts and specters]\label{defn:spooks}
For a commutative $\bS$-algebra $R$, the $\infty$-category of
$R$-haunts over a space $X$ is given by the $\i$-category (actually,
$\i$-groupoid) $(\mathrm{Br}_R)_{/X} = (\Pic_{\Mod_R})_{/X}$ of
$\Mod_R$-torsors over $X$.  For a given haunt $\aH$ on a space $X$,
the $\i$-category of specters is the limit of the composite 
\[
X\to\mathrm{Br}_R\to\Mod_{\Mod_R}.
\]
\end{definition}

It is now possible to reprove the theorems of Douglas using
definition~\ref{defn:spooks} and the work of this paper, although we
do not carry out this project here.

\section{Comparison to the May-Sigurdsson model}\label{sec:comparison}

In this section, we show that our theories of parametrized spaces and
spectra are compatible with those of May-Sigurdsson.  As a
consequence, one can produce a parametrized spectrum in our context
from point-set data (e.g., sequences of parametrized spaces linked by
fiberwise suspension), and a functor from a point-set functor which is
homotopical (e.g., a Quillen functor).  Also as a consequence,
homotopical conclusions about May and Sigurdsson's setup follow from
our results.

To make the comparison, one produces from a model category  
$\aC$ to an associated $\i$-category.  When $\aC$ is a simplicial
model category, one way to do this is to restrict to the full subcategory of
cofibrant-fibrant objects $\aC^\mathrm{cf}$; then the
simplicial nerve \cite[1.1.5.5]{HTT} $\N(\aC^\mathrm{cf})$
is an $\i$-category.  Although any combinatorial model category is
Quillen equivalent to a simplicial model category \cite{dugger}, this
replacement process can be inconvenient.  Furthermore, very few
functors preserve cofibrant-fibrant objects; this is a particular
problem when studying (symmetric) monoidal model categories.

More recently, \cite[\S 1.3.3]{HA} provides an analogue of the
Dwyer-Kan simplicial localization.  Starting with a (not necessarily
simplicial) model category $\aC$, one passes to an $\i$-category via
the ordinary nerve applied to the full subcategory of cofibrant
objects and subsequently inverts the weak equivalences:
\[
\N(\aC^\mathrm{c})[W^{-1}].
\]
Given a simplicial model category $\aC$, there is an equivalence of
$\i$-categories  
\[
\N(\aC^\mathrm{cf}) \htp \N(\aC^\mathrm{c})[W^{-1}],
\]
which implies that we can apply either process as
needed \cite[1.3.3.7]{HA}. 

Recall that $\spaces_{/S}$ can be described via the straightening and
unstraightening correspondence as the $\i$-category associated to the
model category $\Set_{\Delta/ S}$, and $\spaces_{*/S}$ can analogously be
described as the $\i$-category of pointed objects in $\Set_{\Delta/
S}$.  This provides a comparison to the $\i$-categories associated to
the May-Sigurdsson categories of parametrized spaces $\top_{/ B}$ and
$(\top_{/ B})_*$ over a space $B$.

\begin{proposition}
Let $B$ be a space.
There are equivalences of symmetric monoidal $\i$-categories  
\[
\spaces_{/\Sing B}^{\otimes} \htp \N(\Set_{\Delta/ \Sing B})[W^{-1}]^{\otimes} \htp \N(\top_{/B})[W^{-1}]^{\otimes}
\]
and
\[
(\spaces_{/\Sing B})_*^{\otimes} \htp \N((\Set_{\Delta/ \Sing B})_*)[W^{-1}]^{\otimes} \htp \N((\top_{/B})_*)[W^{-1}]^{\otimes}.
\]
\end{proposition}

\begin{proof}
For a space $B$, the projective model structure on $\top_{/B}$ (in
which fibrations and weak equivalences are detected by the forgetful
functor to $\top$ with the standard model structure) is Quillen
equivalent to the corresponding simplicial model category structure on
simplicial sets over $\Sing B$, which in turn is Quillen equivalent to
the simplicial model category of simplicial presheaves on the
simplicial category $\mathfrak{C}[\Sing B]$ (with the projective model
structure) \cite[2.2.1.2]{HTT}.  (Here $\mathfrak{C}$ denotes the left
adjoint to the simplicial nerve; it associates a simplicial category
to a simplicial set \cite[1.1.5]{HTT}.)

Next, we have a comparison
\[
\mathrm{St}\colon\mathrm{N}\Set_{\Delta/\Sing
  B}^\circ\longrightarrow\Fun(\Sing
B^{\op},\mathrm{N}\Set_\Delta^\circ); 
\]
the map, called the {\em straightening} functor, rigidifies a
fibration over $\Sing B$ into a presheaf of $\i$-groupoids on $\Sing
B$ whose value at the point $b$ is equivalent to the fiber over $b$
\cite[3.2.1]{HTT}.

Finally, the symmetric monoidal structure on $\spaces_{/S}$ is Cartesian and
therefore unique \cite[2.4.1.9]{HA}.  Thus, we can promote this
equivalence to an equivalence of symmetric monoidal $\i$-categories.
The result for pointed objects follows.
\end{proof}

To complete the comparison to the model of May-Sigurdsson, we need to
study the base-change functors.  Almost all of the subtlety and
difficulty of the foundational portion of their work arises from the
complexities of topological spaces (which they must contend with in
order to handle the equivariant setting) and the fact that it is
impossible to have a model structure in which the pairs $(f_!, f^*)$
and $(f^*, f_*)$ are simultaneously Quillen adjunctions.

Although the point-set category $(\top_{/B})_*$ of ex-spaces has a
model structure induced by the standard model structure on $\top$
(which they refer to as the $q$-model structure), one of the key
insights of May and Sigurdsson is that for the purposes of stable
parametrized homotopy theory it is essential to work with the (Quillen
equivalent) $qf$-model structure \cite[6.2.6]{MS}.

The situation is easier in the simplicial setting: For a map $f \colon
A \to B$, we can obtain point-set models of the functors $f^*$, $f_*$,
and $f_!$ by considering model categories of simplicial presheaves.
We must still confront the fact that
\[
f^*\colon \Fun(\mathfrak{C}[\Sing
  B^{\op}],\Set_\Delta)\longrightarrow\Fun(\mathfrak{C}[\Sing
  A^{\op}],\Set_\Delta) 
\]
is a {\em right} Quillen functor for the {\em projective} model
structure, with left adjoint $f_!$, and a {\em left} Quillen functor
for the {\em injective} model structure, with right adjoint $f_*$, on
the above categories of simplicial presheaves.  Nonetheless, this
suffices to produce the desired adjoint pairs on the level of
$\i$-categories.

\begin{theorem}
The Wirthm\"uller context we construct in
corollary~\ref{cor:wirthexists} on $(\spaces_{/S})_*$ is compatible  with
that of May-Sigurdsson.
\end{theorem}

\begin{proof}
To see this, observe that it suffices to check this for $f^*$;
compatibility then follows formally for the adjoints $f_*$ and $f_!$.
Thus, we need to check that the right derived functor of $f^* \colon
(\top_{/B})_* \to (\top_{/A})_*$ in the $qf$-model structure
is compatible with the right derived functor of
\[
f^* \colon \Fun(\mathfrak{C}[\Sing B^{\op}], \Set_\Delta) \to
\Fun(\mathfrak{C}[\Sing A^{\op}], \Set_\Delta)
\]
in the projective model structure.  By the work of
\cite[9.3]{MS}, it suffices to check the compatibility for
$f^*$ in the $q$-model structure.  Since both versions of $f^*$ that
arise here are Quillen right adjoints, this amounts to the
verification that the diagram 
\[
\xymatrix{
\Fun(\mathfrak{C}[\Sing B^{\op}], \Set_\Delta) \ar[d]^{\Un} \ar[r]^-{f^*} &
\Fun(\mathfrak{C}[\Sing A^{\op}], \Set_\Delta) \ar[d]^{\Un} \\ 
\Set_{\Delta/B} \ar[r]^-{f^*} & \Set_{\Delta/A}  \\
}
\]
commutes when applied to fibrant objects, where here $\Un$ denotes the
unstraightening functor (which is the right adjoint of the Quillen
equivalence).  Finally, this follows from \cite[2.2.1.1]{HTT}.

The promotion of this comparison to the symmetric monoidal structure
is a consequence of the fact that $f^*$ preserves products and the
fact that the Cartesian symmetric monoidal structure is unique.
\end{proof}

Therefore, in order to compare our model of parametrized spectra over
$B$ to the May-Sigurdsson model, we will work with the corresponding
formal stabilization of model categories.  Specifically, given a left
proper cellular model category $\aC$ and an endofunctor of $\aC$,
Hovey constructs a cellular model category $\Sp^{\bN}
\aC$ of spectra \cite{Hovey}.  When the $\aC$ is
additionally a simplicial symmetric monoidal model category, the
endofunctor given by the tensor with $S^{1}$ yields a simplicial
symmetric monoidal model category of symmetric spectra $\Sp^{\Sigma}
\aC$ (in addition to the simplicial model category $\Sp^{\bN}
\aC$ of spectra).  These models of the stabilization are
functorial in left Quillen functors which are suitably compatible with
the respective endofunctors (see \cite[5.2]{Hovey}).

\begin{proposition}\label{prop:nstabcomm}
Let $\aC$ be a left proper cellular simplicial model category
and write $\Sp^{\bN} \aC$ for the cellular simplicial model
category of spectra generated by the tensor with $S^1$.  Then there is
an equivalence of $\i$-categories
\[
\N((\Sp^{\bN} \aC)^\mathrm{c})[W^{-1}] \htp \Stab(\N(\aC^\mathrm{c})[W^{-1}]).
\]
When $\aC$ is a simplicial symmetric monoidal model category,
this equivalence extends to an equivalence 
\[
\N((\Sp^{\Sigma} \aC)^\mathrm{c})[W^{-1}]^{\otimes} \htp \Stab(\N(\aC^\mathrm{c})[W^{-1}]^{\otimes})
\]
of symmetric monoidal $\i$-categories.
\end{proposition}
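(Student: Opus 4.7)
The plan is to use the universal property of stabilization on the $\i$-categorical side and to match it with Hovey's explicit presentation on the model-categorical side. Recall from \cite[\S 1.4]{HA} that for a presentable $\i$-category $\aD$, the stabilization $\Stab(\aD)$ may be computed as the limit (in $\PrR$, equivalently as a sequential colimit in $\PrL$) of the tower
\[
\cdots\too\aD_{*}\xrightarrow{\Omega}\aD_{*}\xrightarrow{\Omega}\aD_{*},
\]
and moreover $\Stab(-)\simeq(-)\otimes\ispec$ is a symmetric monoidal localization of $\PrL$ onto the full subcategory of stable presentable $\i$-categories. Hovey's construction of $\Sp^{\bN}\aC$ is designed precisely to present the formal inversion of $\Sigma=(-)\otimes S^{1}$ at the model-categorical level; its stable model structure is built so that a cofibrant-fibrant spectrum is an $\Omega$-spectrum, making it an honest limit of the tower in question.

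First I would establish the underlying (non-monoidal) comparison. By \cite[1.3.3.7]{HA} and the discussion of section 3.1 of the paper, the simplicial localization $\N((\aC_{*})^{\mathrm{c}})[W^{-1}]$ agrees with $\N(\aC^{\mathrm{c}})[W^{-1}]_{*}$, so the model-categorical tower defining $\Sp^{\bN}\aC$ corresponds, after localization, to the $\i$-categorical tower defining $\Stab(\N(\aC^{\mathrm{c}})[W^{-1}])$. The key technical input is that the localization functor $\N((-)^{\mathrm{c}})[W^{-1}]$ takes a homotopy limit of left proper cellular simplicial model categories (along right Quillen functors between them) to the corresponding limit of presentable $\i$-categories in $\PrR$; this is essentially a consequence of rectification theorems for homotopy-coherent diagrams and the fact that $\Omega$ is a right Quillen functor. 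Combining these two facts yields the first equivalence.

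To promote this to the symmetric monoidal statement, I would use that when $\aC$ is a simplicial symmetric monoidal model category, $\Sp^{\Sigma}\aC$ inherits a (Day-convolution type) symmetric monoidal model structure whose underlying model category is Quillen equivalent to $\Sp^{\bN}\aC$ in a manner compatible with the respective suspension functors \cite[5.2]{Hovey}. On the $\i$-categorical side, $\Stab$ is a symmetric monoidal localization of $\PrL$, so $\Stab(\N(\aC^{\mathrm{c}})[W^{-1}]^{\otimes})$ is characterized, up to canonical equivalence, by the universal property that it is the initial symmetric monoidal stable presentable $\i$-category receiving a symmetric monoidal functor from $\N(\aC^{\mathrm{c}})[W^{-1}]^{\otimes}$. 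It therefore suffices to exhibit a symmetric monoidal functor from $\N(\aC^{\mathrm{c}})[W^{-1}]^{\otimes}$ to $\N((\Sp^{\Sigma}\aC)^{\mathrm{c}})[W^{-1}]^{\otimes}$ and verify that it satisfies this universal property; the first part follows from the symmetric monoidal version of the nerve-localization functor \cite[4.1.3.6]{HA} applied to the unit functor $\aC \to \Sp^{\Sigma}\aC$, and the second part reduces, via the underlying equivalence above, to the fact that the symmetric monoidal structure on $\Sp^{\Sigma}\aC$ is characterized by the property that the tensor with $S^{1}$ becomes invertible.

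The main obstacle is the last step: matching the point-set Day-convolution symmetric monoidal structure on $\Sp^{\Sigma}\aC$ with the abstractly defined symmetric monoidal structure on $\Stab(\N(\aC^{\mathrm{c}})[W^{-1}]^{\otimes})$, since both are characterized by universal properties (inversion of $S^{1}$) but in different categorical settings. The cleanest way to handle this is to invoke a general theorem on symmetric monoidal localization, such as \cite[4.1.7]{HA} or the recognition criterion for symmetric monoidal model categories presenting symmetric monoidal $\i$-categorical localizations, rather than attempting a direct point-set identification; the bulk of the work in this final step is bookkeeping to show that the comparison functor constructed from the universal property is essentially surjective (which amounts to recognizing cofibrant-fibrant symmetric spectra as $\Omega$-spectra) and fully faithful at the level of mapping spaces of commutative algebras.
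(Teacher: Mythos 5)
Your overall plan---identify $\Stab$ with the limit of the $\Omega$-tower and match it with Hovey's stable model structure, then handle the monoidal part by universal property---is the same strategy the paper uses, but there is a genuine gap at exactly the point where the paper does the real work. You assert that the comparison follows because ``the localization functor $\N((-)^\mathrm{c})[W^{-1}]$ takes a homotopy limit of left proper cellular simplicial model categories (along right Quillen functors) to the corresponding limit of presentable $\i$-categories,'' citing rectification as the reason. But $\Sp^{\bN}\aC$ with its stable model structure is not presented as a homotopy limit of model categories: it is a left Bousfield localization of the level (projective) model structure on sequence objects, constructed so as to invert $\Sigma$. To use the fact you quote, you would first have to show that Hovey's stable model structure presents the same $\i$-category as the sections/homotopy-limit model structure on the $\Omega$-tower of copies of $\aC_*$; nothing in your proposal addresses this, and it is not a formal consequence of rectification. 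The paper handles this head-on: it defines an explicit comparison functor via the evaluation functors $\mathrm{Ev}_n$, observes it is essentially surjective, and proves full faithfulness by computing mapping spaces of cofibrant-fibrant spectra, reducing by cellular induction to the case $A = F_m X$ (a shifted suspension spectrum on a cofibrant object) and identifying both sides with $\map(\Sigma^{n-m}X, B_n)$ in a range. That inductive mapping-space computation is the content; your proposal replaces it with an appeal to a theorem that does not quite apply.

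On the symmetric monoidal half your proposal and the paper are similarly brief. Both reduce to the universal property of $\Stab$ among stable symmetric monoidal presentable $\i$-categories and invoke the prespectra/symmetric spectra comparison. You flag the ``main obstacle'' of matching the Day-convolution structure with the abstract one, which is a fair concern, but you do not resolve it beyond suggesting that some symmetric monoidal recognition criterion should do it; the paper is essentially equally terse here. So this part is not more correct or incorrect than the paper's, just similarly compressed. The concrete deficiency in your proposal is the missing mapping-space argument establishing that $\Sp^{\bN}\aC$ really does present $\Stab(\N(\aC^\mathrm{c})[W^{-1}])$.
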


\begin{proof}
The functors $\mathrm{Ev}_n\colon\Sp^{\bN}\aC \to \aC$ which associate
to a spectrum its $n^\mathrm{th}$-space $A_n$, induce a functor
\begin{align*}
f\colon \N((\Sp^{\bN}\aC)^\mathrm{c})[W^{-1}] \to \lim\{\cdots\overset{\Omega}{\longrightarrow}&\N(\aC^\mathrm{c}_*)[W^{-1}]\overset{\Omega}{\longrightarrow} 
\N(\aC^\mathrm{c}_*)[W^{-1}]\} \\
&\htp \Stab(\N(\aC^\mathrm{c})[W^{-1}])
\end{align*}
which is evidently essentially surjective.
To see that it is fully faithful, it suffices to check that for
cofibrant-fibrant spectrum objects $A$ and $B$ in $\Sp^{\bN}\aC$, there is an equivalence of mapping spaces
\[
\map(A,B)\simeq\holim\{\cdots\overset{\Omega}{\longrightarrow}\map(A_1,B_1)\overset{\Omega}{\longrightarrow}\map(A_0,B_0)\},
\]
where $\Omega\colon\map(A_{n+1},B_{n+1})\to\map(A_n,B_n)$ acts as
\[
A_{n+1}\to B_{n+1} \mapsto A_n\simeq\Omega A_{n+1}\to\Omega
B_{n+1}\simeq B_n.
\]
Since any cofibrant $A$ is a retract of a cellular object, inductively we can reduce to the case in which $A=F_m X$, i.e., the shifted suspension spectrum on a cofibrant object $X$ of $\aC_*$.
Then $\map(A,B)\simeq\map(X,B_m)$ by adjunction.
The latter is in turn equivalent to $\map(\Sigma^{n-m} X, B_n)$, where we interpret $\Sigma^{n-m} X=\ast$ for $m>n$, in which case the homotopy limit is equivalent to that of the homotopically constant (above degree $n$) tower whose $n^\mathrm{th}$ term is $\map(\Sigma^{n-m} X, B_n)$.

In the symmetric monoidal setting, the fact that $\Stab \aC$ is the
initial stable symmetric monoidal $\i$-category which accepts a
symmetric monoidal $\i$-functor from $\aC$ coupled with the
equivalence between prespectra and symmetric spectra implies the
desired comparison.
\end{proof}

May and Sigurdsson construct a symmetric monoidal stable model
structure on the category $\mathscr{S}_{B}$ of orthogonal spectra in
$(\top_{/B})_*$ \cite[12.3.10]{MS}.  This model structure is based on
the $qf$-model structure on ex-spaces, leveraging the diagrammatic
viewpoint of \cite{MR1806878, MandellMay}.  Similarly, they construct
a stable model structure on the category $\mathscr{P}_{B}$ of
prespectra in $(\top_{/B})_*$.  The forgetful functor
$\mathscr{S}_{B} \to \mathscr{P}_{B}$ induces a Quillen equivalence
\cite[12.3.10]{MS}.

\begin{theorem}\label{thm:comp}
Let $B$ be a topological space.  There is an equivalence of symmetric monoidal
$\i$-categories between the $\i$-category associated to the model category
of orthogonal spectra and the $\i$-category of parametrized spectra:
\[
\N(\mathscr{S}_B)[W^{-1}]^{\otimes} \htp \Fun(\Sing B^{\op}, \ispec)^{\otimes}.
\]
\end{theorem}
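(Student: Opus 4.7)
The plan is to assemble this equivalence by stabilizing the already-established comparison for pointed parametrized spaces and then transporting the symmetric monoidal structure through the orthogonal-spectra model.

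First, I would reduce from orthogonal spectra to prespectra. The May--Sigurdsson Quillen equivalence $\mathscr{S}_B \rightleftarrows \mathscr{P}_B$ induces an equivalence of underlying $\i$-categories, so on the level of underlying $\i$-categories it suffices to identify $\N(\mathscr{P}_B)[W^{-1}]$ with $\Fun(\Sing B^{\op},\ispec)$. Next, I would invoke (a mild variant of) Proposition~\ref{prop:nstabcomm} for the stable model structure on $\mathscr{P}_B$: since $\mathscr{P}_B$ is the Hovey-style spectrification of $(\top_{/B})_*$ with respect to the fiberwise suspension, the same argument as in that proposition --- evaluating at each level and checking full faithfulness on shifted suspension prespectra by the adjunction --- yields
\[
\N(\mathscr{P}_B)[W^{-1}] \htp \Stab\bigl(\N((\top_{/B})_*)[W^{-1}]\bigr).
\]
Combining this with the comparison established in Section~\ref{sec:paramspaces} between $\N((\top_{/B})_*)[W^{-1}]$ and $(\aT_{/\Sing B})_*$, and with the identification $\Stab(\aC_{/S}) \simeq \Stab(\aC)_{/S}$ applied to $\aC = \aT_*$, we obtain an equivalence of underlying $\i$-categories
\[
\N(\mathscr{P}_B)[W^{-1}] \htp \Stab((\aT_{/\Sing B})_*) \htp \ispec_{/\Sing B} = \Fun(\Sing B^{\op},\ispec).
\]

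To upgrade to a symmetric monoidal equivalence I would work directly with $\mathscr{S}_B$ rather than $\mathscr{P}_B$, since $\mathscr{P}_B$ does not carry a smash product. Invoking the symmetric monoidal half of Proposition~\ref{prop:nstabcomm} applied to the simplicial symmetric monoidal model category of ex-spaces, together with the universal property of stabilization as the initial stable symmetric monoidal $\i$-category receiving a symmetric monoidal functor from $(\aT_{/\Sing B})_*$, both constructions become characterized by the same universal property. This matches $\N(\mathscr{S}_B)[W^{-1}]^{\otimes}$ with $\Stab((\aT_{/\Sing B})_*^\otimes)$, which by the symmetric monoidal refinement of the commutation $\Stab(\aC_{/S}) \simeq \Stab(\aC)_{/S}$ agrees with $\Fun(\Sing B^{\op},\ispec)^\otimes$.

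The main obstacle is packaging Proposition~\ref{prop:nstabcomm} in a form that applies to $\mathscr{S}_B$, given that May--Sigurdsson use the $qf$-model structure on $(\top_{/B})_*$ (and the associated level model structures on orthogonal spectra and prespectra) rather than a standard simplicial model structure. This obstacle is handled exactly as in the proof of the pointed spaces comparison: by the result of \cite[9.3]{MS} the identity is a Quillen equivalence between the $q$- and $qf$-model structures, so the associated $\i$-categories agree, and all of the structure functors (level evaluations, smash product, $f^*$, etc.) derive compatibly. Finally, to confirm that the symmetric monoidal equivalence is actually one of Wirthm\"uller contexts (i.e., intertwines the base-change functors), it suffices to check compatibility for $f^*$, reducing via straightening/unstraightening exactly as in the unstable case; the adjoints $f_!$ and $f_*$ then match automatically.
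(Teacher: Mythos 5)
Your proposal follows essentially the same route as the paper's (very terse) proof, which simply invokes Proposition~\ref{prop:nstabcomm} together with the MMSS comparison of orthogonal and symmetric spectra; you have just fleshed out the steps the paper treats as implicit, namely the pointed ex-space comparison from Section~\ref{sec:paramspaces}, the identification $\Stab(\aC_{/S}) \simeq \Stab(\aC)_{/S}$, and the $q$- versus $qf$-model structure reconciliation via \cite[9.3]{MS}. The one place where the detail slightly blurs is the monoidal upgrade: Proposition~\ref{prop:nstabcomm} produces a symmetric monoidal comparison for Hovey-style \emph{symmetric} spectra $\Sp^{\Sigma}\aC$, so to land on the May--Sigurdsson $\mathscr{S}_B$ of \emph{orthogonal} spectra one still needs the level-Quillen-equivalence between symmetric and orthogonal parametrized spectra (this is the MMSS comparison the paper cites explicitly), rather than the orthogonal-to-prespectra equivalence you use for the underlying $\i$-categories; otherwise the argument is sound.
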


\begin{proof}
This is essentially an immediate consequence of
proposition~\ref{prop:nstabcomm}, using the standard comparison
between orthogonal spectra and symmetric spectra \cite{MR1806878}.
\end{proof}

\end{document}